\def\input@path{{figures/}}\makeatother
\newtheorem{theorem}{Theorem}
\newtheorem{corollary}[theorem]{Corollary}
\newtheorem{lemma}[theorem]{Lemma}
\newtheorem{claim}{Claim}
\newtheorem*{theorem*}{Theorem}
\newtheorem{observation}{Observation}
\definecolor{darkblue}{rgb}{0,0,0.7} 
\definecolor{green}{RGB}{57,181,74} 
\definecolor{violet}{RGB}{147,39,143} 
\newcommand{\darkblue}{\color{darkblue}} 
\newcommand{\defn}[1]{\textsl{\darkblue #1}} 
\DeclareMathOperator{\polylog}{\mathrm{polylog}}
\def\l@part{\@tocline{1}{8pt}{0pc}{}{}}
\def\l@section{\@tocline{1}{4pt}{0pc}{}{}}
\let\oldtocpart=\tocpart
\renewcommand{\tocpart}[2]{\sc\large\oldtocpart{#1}{#2}}
\let\oldtocsection=\tocsection
\renewcommand{\tocsection}[2]{\bf\oldtocsection{#1}{#2}}
\let\oldtocsubsubsection=\tocsubsubsection
\renewcommand{\tocsubsubsection}[2]{\quad\oldtocsubsubsection{#1}{#2}}
\title{Compact Representation of Semilinear and Terrain-like Graphs}
\author{Jean Cardinal}
\address{Université libre de Bruxelles (ULB)}
\email{Jean.Cardinal@ulb.be}
\urladdr{\url{https://jean.cardinal.web.ulb.be}}
\author{Yelena Yuditsky}
\address{Université libre de Bruxelles (ULB)}
\email{Yelena.Yuditsky@ulb.be}
\urladdr{\url{https://sites.google.com/view/yuditsky/home}}
\begin{document}

\maketitle

\begin{abstract}
We consider the existence and construction of \textit{biclique covers} of graphs, consisting of coverings of their edge sets by complete bipartite graphs. The \textit{size} of such a cover is the sum of the sizes of the bicliques.
Small-size biclique covers of graphs are ubiquitous in computational geometry, and have been shown to be useful compact representations of graphs.
We give a brief survey of classical and recent results on biclique covers and their applications, and give new families of graphs having biclique covers of near-linear size.

In particular, we show that semilinear graphs, whose edges are defined by linear relations in bounded dimensional space, always have biclique covers of size $O(n\polylog n)$. This generalizes many previously known results on special classes of graphs including interval graphs, permutation graphs, and graphs of bounded boxicity, but also new classes such as intersection graphs of L-shapes in the plane. 
It also directly implies the bounds for Zarankiewicz's problem derived by Basit, Chernikov, Starchenko, Tao, and Tran (\textit{Forum Math. Sigma}, 2021).

We also consider capped graphs, also known as terrain-like graphs, defined as ordered graphs forbidding a certain ordered pattern on four vertices. Terrain-like graphs contain the induced subgraphs of terrain visibility graphs. 
We give an elementary proof that these graphs admit biclique partitions of size $O(n\log^3 n)$.  
This provides a simple combinatorial analogue of a classical result from Agarwal, Alon, Aronov, and Suri on polygon visibility graphs (\textit{Discrete Comput. Geom.} 1994).

Finally, we prove that there exists families of unit disk graphs on $n$ vertices that do not admit biclique coverings of size $o(n^{4/3})$, showing that we are unlikely to improve on Szemer\'edi-Trotter type incidence bounds for higher-degree semialgebraic graphs.
\end{abstract}

\tableofcontents

\section{Introduction}

Graph covering and partitioning is a well-studied topic in graph theory with numerous connections to real-world problems, see for example the survey of Schwartz~\cite{MR4401917}. Given a graph $G$, a \defn{cover} of $G$ is a collection $\mathcal{H}$ of subgraphs of $G$ such that each edge of $G$ is contained in at least one of the subgraphs in $\mathcal{H}$, that is, $E(G)\subseteq \bigcup_{H\in \mathcal{H}}E(H)$. 
A \defn{partition} of $G$ is a collection $\mathcal{H}$ of subgraphs of $G$ such that each edge of $G$ is contained in {\it exactly} one of the subgraphs in $\mathcal{H}$. 
Clearly, any partition of a graph is also a cover.
The subgraphs in the cover (or partition) can be chosen in different ways, for example, they can be chosen to be cliques, complete bipartite graphs (to which we refer as \defn{bicliques}), cycles, paths, and other graphs (see \cite{MR4401917} for a list of results for each of those graph classes). 

A natural problem is, for a given graph, to determine the smallest possible number of subgraphs in a cover. We are interested in a related parameter of the cover. Let $G$ be a graph and let $\mathcal{H}$ be a cover of $G$ with any type of subgraphs. The \defn{size} of a cover $\mathcal{H}$ is $\sum_{H\in \mathcal{H}}|V(H)|$. Clearly, those parameters are related. If $G$ has a cover with $\ell$ subgraphs, then there is also a cover of $G$ of size $\ell n$ where $n=|V(G)|$. 

In this work, we focus on the size of covers with bicliques, to which we refer as \defn{biclique covers}. See~\Cref{fig:bicCover} for an example of biclique cover of a graph. 
\begin{figure}[!h]
\centering
\includegraphics[scale=.7]{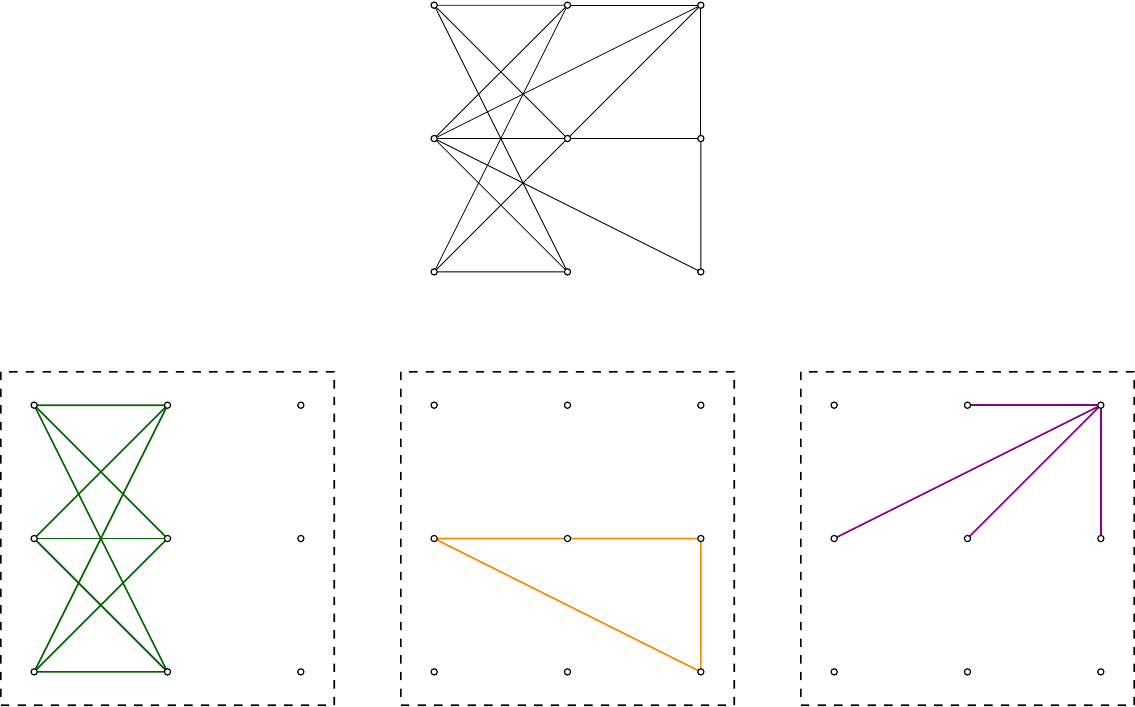}
\caption{An example of a graph and a biclique cover with three bicliques of total size $15$. \label{fig:bicCover}}
\end{figure}

Biclique covers have applications to the multicommodity flow problem \cite{Gun07}, quantified Boolean formulas \cite{KS23}, and communication complexity of boolean functions \cite{JK09}. 
In a seminal paper, Feder and Motwani~\cite{FM95} showed that biclique covers of small sizes can be used as compact representations of graphs, on which many computational problems can be solved more efficiently than on standard representations.
The minimum size of a biclique cover of a graph is therefore sometimes referred to as its \defn{representation complexity}~\cite{MR4013919}.
    
\subsection{Our results}

We study biclique covers for various classes of graphs defined in geometric terms. We focus on graph classes for which there exist biclique covers of size $O(n\polylog n)$, where $n$ is the number of vertices. Some of the terms used below are defined in the next section~\ref{subsec:background}.

Our contribution is threefold.
We first consider \defn{semilinear graphs}, defined as semialgebraic graphs for which the defining functions are linear. 
This class of graphs has first been defined by Basit, Chernikov, Starchenko, Tao, and Tran~\cite{BCSTT21}, and further studied by Tomon~\cite{T23}. 
We generalize a number of results above by showing that semilinear graphs have biclique partitions of size $O(n\polylog n)$. This in particular gives a simple proof of the bound on Zarankiewicz’s problem for semilinear graphs shown by Basit et al.~\cite{BCSTT21}.  
Semilinear graphs include many previously studied graph classes such as interval graphs, threshold graphs, permutation graphs, bounded-dimension comparability graphs, bounded-boxicity graphs, and intersection graphs of $L$-shapes and other orthogonal shapes in the plane. 
For certain restricted classes of semilinear graphs, we establish improved bounds on the size of a biclique cover compared to those obtainable from the general bounds. 

We then show that \defn{terrain-like graphs}, also known as \defn{capped graphs}~\cite{MR4562782} have biclique partitions of size $O(n\log^3 n)$. These graphs contain in particular induced subgraphs of \defn{terrain visibility graphs}.
This result can be seen as a combinatorial variant of the influential result of Agarwal, Alon, Aronov, and Sharir~\cite{MR1298916} on compact representation of visibility graphs of polygons, since terrain-like graphs include terrain visibility graphs. The two results, however, are incomparable, since there exist terrain-like graphs that are not visibility graphs of terrains~\cite{MR4117719}.

Finally, we answer a question asked by Csaba T\'oth~\cite{C23}: Can unit disk graphs have near-linear biclique covers? Note that the existence of such covers is not ruled out by a counting argument, since there are $2^{O(n\log n)}$ unit disk graphs on $n$ vertices~\cite{S21}.
Yet, we answer the question in the negative: There exist unit disk graphs on $n$ vertices, every biclique cover of which has size $\Omega (n^{4/3})$, which is essentially tight. This is proved by considering the standard examples of configurations of points and lines that are tight for the Szemer\'edi-Trotter Theorem~\cite{MR2177675}, and applying a charging argument.
Together with our upper bound for semilinear graphs, this result provides a delineation of what classes of graphs have near-linear biclique covers; as soon as we allow the functions defining a semialgebraic graph to be of degree two, we may obtain strongly superlinear lower bounds on the size of biclique covers.

\subsection{Background}
\label{subsec:background}

Before detailing our results, we first give a brief survey of known results and applications of biclique covers and partitions. 
We take the opportunity here to gather results that are sometimes 
considered as folklore, but were never thoroughly compiled.

\subsubsection{Covers and partitions in general graphs}

The minimal number of bicliques needed to cover a graph has been studied by 
Tuza~\cite{T84}, R\"{o}dl and Ruci\'{n}ski~\cite{RR97}, and Jukna and Kulikov~\cite{JK09}, among others. 
As mentioned earlier, we are interested in the size of a biclique cover, which, to recall, is defined as the sum of the number of vertices across all the bicliques in the cover.
Let us first note that if $G$ is an $n$-vertex graph with $O(n)$ edges, then $G$ has a trivial biclique cover of size $O(n)$. Hence the question of bounding the size of a biclique cover is not relevant in classes of graphs with linearly many edges, such as bounded-treewidth graphs or planar graphs. 

A cover of size $O(n \log n)$ of the complete graph $K_n$ can be obtained recursively as follows: first split the set of vertices into two disjoint subsets of equal size, and add the corresponding biclique to the cover, then recurse on both subsets. The cover we obtain in this way is also a biclique partition with $n$ bicliques.

A bound on the size of a biclique cover of an arbitrary graph was proved by Chung, Erd\H{o}s, and Spencer~\cite{MR820214}, and later by Tuza~\cite{T84}.
\begin{theorem}[\cite{MR820214, T84}]
\label{thm:gengraphs}
    Let $G$ be a graph on $n$ vertices. Then there exists a biclique cover of $G$ of size $O(n^2/\log n)$, and this bound is tight.
\end{theorem}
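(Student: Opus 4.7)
The plan for the upper bound is a greedy extraction based on the K\H{o}v\'ari--S\'os--Tur\'an theorem: repeatedly find a large balanced biclique in the current graph, add it to the cover, and delete its edges. Concretely, in an $n$-vertex graph with $m$ remaining edges, KST guarantees a copy of $K_{t,t}$ whenever $m \ge C\cdot n^{2-1/t}$ for an absolute constant $C$, so one can take $t = \Theta(\log n/\log(n^2/m))$. A single extraction adds $2t$ to the cover size and removes $t^2$ edges, i.e.\ an amortized cost of $\Theta(\log(n^2/m)/\log n)$ per edge. Partitioning the process into dyadic phases $m \in [n^2/2^{k+1}, n^2/2^k]$, the cost of phase $k$ evaluates to $O(n^2 k/(2^k \log n))$, and summing the convergent series $\sum_k k\, 2^{-k}$ yields a total of $O(n^2/\log n)$. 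Once $m$ drops below $n^{3/2}$---the threshold where KST ceases to provide $K_{t,t}$ with $t\ge 2$---I cover each remaining edge individually at a cost of $O(n^{3/2})=o(n^2/\log n)$.

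For the matching lower bound, the plan is to use an Erd\H{o}s--R\'enyi random graph $G\sim G(n,1/2)$. A first-moment calculation shows that the expected number of copies of $K_{a,a}$ with $a=\lceil 3\log_2 n\rceil$ is at most $\binom{n}{a}^2 \cdot 2^{-a^2} = 2^{O(a\log n)-a^2} = o(1)$, so with high probability every biclique contained in $G$ has its smaller side of size $O(\log n)$. A Chernoff bound simultaneously gives $|E(G)|=\Theta(n^2)$ with high probability. For any biclique cover $\{K_{a_i,b_i}\}$ of $G$ with $a_i\le b_i$ and each $a_i = O(\log n)$, one has $\sum_i a_i b_i \ge |E(G)|$, whence
\[
\sum_i (a_i + b_i) \ \ge\ \sum_i b_i \ \ge\ \frac{1}{O(\log n)}\sum_i a_i b_i \ =\ \Omega\!\left(\frac{n^2}{\log n}\right).
\]

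The main technical point will be the dyadic summation in the upper bound: one must verify that the constants hidden in the KST threshold do not spoil the asymptotics $t=\Theta(\log n/\log(n^2/m))$, and that the handoff at $m=n^{3/2}$ to the trivial edge-by-edge cover is executed cleanly so that the leftover term is absorbed. The lower bound is essentially routine once the random-graph biclique bound and the edge-count concentration are in place.
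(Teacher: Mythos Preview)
The paper does not prove this theorem; it is quoted as a known result of Chung--Erd\H{o}s--Spencer and Tuza, so there is no in-paper proof to compare against. Your plan is sound on both sides.

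For the upper bound, the greedy K\H{o}v\'ari--S\'os--Tur\'an extraction with dyadic bookkeeping is exactly the classical route. The only thing to watch is the densest phase: when $m$ is within a constant factor of $\binom{n}{2}$ one has $\log(n^2/m)=\Theta(1)$ and hence $t=\Theta(\log n)$, so the amortized cost $2/t$ is $\Theta(1/\log n)$ and the phase contributes $O(n^2/\log n)$ on its own. Your summation $\sum_k k\,2^{-k}$ handles this correctly (it is the $k=1$ term), and the factor $(t-1)^{1/t}=1+o(1)$ in the KST threshold is harmless for $t=O(\log n)$, as you note. The handoff at $m\approx n^{3/2}$ is fine; in fact any cutoff $m=n^{2-\varepsilon}$ works, since the dyadic tail is already summable.

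For the lower bound, the random-graph argument is correct as written: the first-moment bound kills all $K_{a,a}$ with $a\ge 3\log_2 n$, Chernoff gives $|E|=\Theta(n^2)$, and then $\sum b_i \ge \sum a_i b_i / \max_i a_i \ge |E|/O(\log n)$ finishes it. This is the standard probabilistic proof; the original papers give explicit constructions, but your approach is equally valid and shorter.
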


We observe a connection between the size of a biclique cover and constructions of graphs with many edges and without a complete bipartite graph $K_{t,t}$ as a subgraph, also known as the Zarankiewicz problem. We refer, for example, to Conlon~\cite{C22} for a construction of such graphs and to Smorodinsky~\cite{S24} for a survey on the Zarankiewicz problem in geometric graphs. Let $G$ be a graph without a $K_{t,t}$ subgraph and let $\mathcal{H}$ be a biclique cover of $G$. Let $H\in \mathcal{H}$, then $|E(H)|\le t |V(H)|$ and therefore $\sum_{H\in \mathcal{H}}|V(H)|\ge \sum_{H\in \mathcal{H}}|E(H)|/t \ge |E(G)|/t$. Hence we can deduce the following observation.
\begin{observation}\label{obs:K_ttfreeLB}
    Let $G$ be a graph without a $K_{t,t}$ subgraph, for some $t\in\mathbb{N}$, and with a biclique cover of size $s$.
    Then $G$ has at most $t\cdot s$ edges.
\end{observation}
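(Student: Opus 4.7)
The plan is to bound the number of edges contributed by each biclique in the cover by a linear function of its vertex count, and then sum. Let $\mathcal{H}$ be the given biclique cover of $G$ of size $s$, and consider an arbitrary $H \in \mathcal{H}$. By definition, $H$ is a complete bipartite subgraph of $G$, say $H = K_{a,b}$ with $a \le b$. Since $G$ contains no $K_{t,t}$, neither does $H$, and consequently $a \le t-1$. Writing $|E(H)| = ab = \tfrac{ab}{a+b}\cdot |V(H)|$ and using the elementary inequality $ab/(a+b) \le \min(a,b) = a$, I obtain
\[
|E(H)| \;\le\; a\cdot |V(H)| \;\le\; (t-1)\,|V(H)| \;\le\; t\,|V(H)|.
\]

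Summing this per-biclique inequality over all $H \in \mathcal{H}$, and using the defining property of a cover that each edge of $G$ belongs to at least one $H$,
\[
|E(G)| \;\le\; \sum_{H\in\mathcal{H}}|E(H)| \;\le\; t\sum_{H\in\mathcal{H}}|V(H)| \;=\; t\cdot s,
\]
which is exactly the desired bound. The first inequality is an inequality rather than an equality precisely because the bicliques may overlap on edges; the proof would go through identically if $\mathcal{H}$ were a partition.

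There is no genuine obstacle here: the argument is a one-line averaging on each piece of the cover, followed by summation. The only point that deserves a brief sanity check is that each biclique in $\mathcal{H}$ inherits $K_{t,t}$-freeness from $G$, which is automatic since the bicliques are by assumption subgraphs of $G$. I would state the observation with $t\cdot s$ rather than the sharper $(t-1)\cdot s$ to match the clean form in which it will be used later.
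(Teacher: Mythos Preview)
Your proof is correct and follows essentially the same approach as the paper: for each biclique $H$ in the cover, the $K_{t,t}$-freeness forces the smaller side to have fewer than $t$ vertices, giving $|E(H)|\le t\,|V(H)|$, and summing over the cover yields $|E(G)|\le t\cdot s$. Your observation that the constant could be sharpened to $t-1$ is a small bonus not mentioned in the paper.
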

For classes of graphs that have biclique partition of size $O(n\polylog n)$, like the ones studied in this paper, this directly gives an upper bound of $O(n\polylog n)$ on the number of edges of these graphs when $K_{t,t}$ is forbidden, for some constant $t$. 

Let $\mathcal{F}$ be a class of graphs and let $\mathcal{F}_n$ be the graphs in $\mathcal{F}$ with exactly $n$ vertices. If any graph $G\in \mathcal{F}_n$ has a biclique cover of size $s(n)$ then $|\mathcal{F}_n|\le 2^{s(n)\lceil\log n \rceil+2s(n)}$. Indeed, every graph can be encoded by a binary string of length at most $s(n)\lceil\log n\rceil+2s(n)$ where every vertex is encoded by a binary string of length $\lceil\log n\rceil$ and at most $2s(n)$ additional bits separating the bicliques and defining the bipartition inside each biclique.  
Based on the above, we make the following observation.

\begin{observation}\label{obs:graph_count}
    Let $\mathcal{F}$ be a family of graphs and let $s(n)$ be the maximum size of a biclique cover of a graph in $\mathcal{F}_n$, then $|\mathcal{F}_n| \leq 2^{O(s(n)\log n)}$. 
\end{observation}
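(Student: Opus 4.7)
The plan is to give an explicit encoding of each graph $G\in\mathcal{F}_n$ by a binary string whose length is $O(s(n)\log n)$, and then conclude by the trivial bound that the number of binary strings of that length is $2^{O(s(n)\log n)}$.

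First, I would fix, for each $G\in\mathcal{F}_n$, a biclique cover $\mathcal{H}(G)$ of size at most $s(n)$ (such a cover exists by hypothesis), and an arbitrary labelling of $V(G)$ by $\{1,\dots,n\}$. Note that $G$ is determined by $\mathcal{H}(G)$ together with the vertex labels, since the edge set of $G$ is exactly the union of the edge sets of the bicliques in $\mathcal{H}(G)$. It therefore suffices to encode $\mathcal{H}(G)$.

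Next, I would describe the encoding. For each biclique $H\in\mathcal{H}(G)$ with bipartition $(A,B)$, I write down the list of vertices of $A$ followed by the list of vertices of $B$. Each vertex occurrence is encoded using $\lceil \log n\rceil$ bits for its label, plus one bit indicating whether it belongs to the $A$-side or the $B$-side of its biclique, plus a single bit marking whether it is the last vertex of its biclique (so that consecutive bicliques can be delimited). This uses $\lceil\log n\rceil + 2$ bits per vertex occurrence, for a total encoding length of at most $(\lceil\log n\rceil+2)\cdot s(n)=O(s(n)\log n)$ bits.

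Since distinct graphs $G\in\mathcal{F}_n$ yield distinct binary strings under this encoding, we obtain $|\mathcal{F}_n|\leq 2^{O(s(n)\log n)}$, as claimed. The argument contains no real obstacle; the only care needed is to make sure that the encoding is self-delimiting, which is handled by the single extra bit per vertex occurrence flagging the end of each biclique.
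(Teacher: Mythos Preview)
Your proposal is correct and essentially identical to the paper's own argument: the paper encodes each vertex occurrence with $\lceil\log n\rceil$ bits and uses at most $2s(n)$ additional bits to separate the bicliques and record the bipartition, arriving at the same bound $s(n)\lceil\log n\rceil+2s(n)$ that your per-vertex count of $\lceil\log n\rceil+2$ bits yields.
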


Note that for many classes of geometric graphs, the upper bound in the above observation is significantly worse than the best known upper bounds~\cite{S21}.

\subsubsection{Algorithms}

Feder and Motwani~\cite{FM95} observed that biclique covers can be used to construct compressed representations of graphs, on which several computational problems can be solved efficiently. 
In a compressed representation, each biclique in the cover is replaced by a subgraph whose number of edges is proportional to the size of the biclique. This can be interpreted as a \emph{sparsification} procedure, that preserves features of the initial graph while reducing the number of edges.
A simple application of this idea is to the all-pairs shortest paths problem. 
\begin{lemma}[\cite{FM95}]
\label{lem:apsp}
Given a biclique cover of size $s$ of a graph $G$ on $n$ vertices, then the breadth-first search tree rooted at any vertex of $G$ can be computed in time $O(s)$, hence the (unweighted) all-pairs shortest paths problem can be solved in time $O(n\cdot s)$.
\end{lemma}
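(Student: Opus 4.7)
The plan is to construct a sparsified auxiliary graph $G'$ from the biclique cover and then argue that breadth-first search in $G'$ simulates breadth-first search in $G$ at twice the hop count, with total size $O(s)$.

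More concretely, let $\mathcal{H} = \{H_1, \ldots, H_k\}$ be the biclique cover, where each $H_i$ has bipartition $(A_i, B_i)$. I would build $G'$ as follows: keep the $n$ original vertices of $G$; for each biclique $H_i$, introduce a fresh auxiliary vertex $v_i$; and replace the complete bipartite graph between $A_i$ and $B_i$ by the $|A_i|+|B_i|$ edges joining $v_i$ to every vertex of $A_i \cup B_i$. Thus each biclique of size $|A_i|+|B_i|$ contributes $|A_i|+|B_i|$ edges (and one new vertex) to $G'$, so $|V(G')| \le n + k \le n + s$ and $|E(G')| = \sum_i (|A_i|+|B_i|) = s$. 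In particular, $G'$ has $O(s)$ vertices and edges (noting $n \le s$ once we assume $G$ has no isolated vertices, and otherwise handling isolated vertices trivially).

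The key structural claim is that for any two original vertices $u, w \in V(G)$, the distance in $G'$ satisfies $d_{G'}(u,w) = 2 \cdot d_G(u,w)$. One direction is immediate: every edge $uw$ of $G$ lies in some biclique $H_i$, so the path $u\, v_i\, w$ of length $2$ exists in $G'$, and concatenating such paths along a shortest $u$-$w$ path in $G$ yields a walk in $G'$ of length $2 \cdot d_G(u,w)$. For the other direction, observe that every edge in $G'$ is incident to at least one auxiliary vertex, so any walk in $G'$ between two original vertices alternates between original and auxiliary vertices and therefore has even length; moreover, the subwalk $u\, v_i\, w$ corresponds to the edge $uw$ of $G$ (since $u \in A_i$ and $w \in B_i$, or vice versa, making $uw$ an edge of the biclique $H_i \subseteq G$). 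Contracting each auxiliary vertex thus converts any $u$-$w$ walk of length $2\ell$ in $G'$ into a $u$-$w$ walk of length $\ell$ in $G$.

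Given the claim, BFS from any vertex $u$ in $G'$ runs in time $O(|V(G')| + |E(G')|) = O(s)$ and produces all distances $d_{G'}(u,\cdot)$; dividing the distances to original vertices by two recovers $d_G(u,\cdot)$ and the corresponding BFS tree (obtained by skipping the auxiliary intermediate vertices). Repeating this from each of the $n$ source vertices yields all-pairs shortest paths in time $O(n \cdot s)$.

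I do not expect a significant obstacle here; the construction is standard. The only point that requires care is verifying the distance identity in $G'$, specifically that shortcuts through auxiliary vertices cannot beat $2 \cdot d_G(u,w)$ — this is handled by the parity/contraction argument above, which uses the fact that $G'$ is bipartite between original and auxiliary vertices.
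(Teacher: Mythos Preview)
Your overall strategy---replace each biclique by a sparse gadget and run BFS on the resulting $O(s)$-size graph---is exactly the Feder--Motwani idea the paper alludes to (the paper does not prove the lemma itself, only remarks that ``each biclique in the cover is replaced by a subgraph whose number of edges is proportional to the size of the biclique''). However, your specific gadget is broken: joining a single undirected auxiliary vertex $v_i$ to \emph{all} of $A_i\cup B_i$ creates length-$2$ paths in $G'$ between any two vertices on the \emph{same} side of a biclique, and such pairs need not be adjacent in $G$. Your contraction argument silently assumes that in a subwalk $u\,v_i\,w$ the endpoints lie on opposite sides of $H_i$, but nothing in the construction enforces this. Concretely, take $G$ to be the path $a\text{--}b\text{--}c\text{--}d$ with the cover $H_1=(\{a,c\},\{b\})$, $H_2=(\{c\},\{d\})$: then $d_G(a,c)=2$, yet in your $G'$ we have the path $a\,v_1\,c$, so $d_{G'}(a,c)=2$ and your formula outputs $d_G(a,c)=1$.

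The standard repair is to make the gadget side-aware. One clean option: for each biclique introduce \emph{two} auxiliary vertices $v_i,v_i'$ and use directed edges $A_i\to v_i\to B_i$ and $B_i\to v_i'\to A_i$. Now every length-$2$ directed hop through an auxiliary vertex genuinely crosses the bipartition, so the identity $d_{G'}=2\,d_G$ holds for directed distances between original vertices; the graph still has $O(s)$ arcs, and directed BFS runs in $O(s)$ time. With this correction the rest of your argument goes through unchanged.
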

We refer to Chan and Skrepetos~\cite{CS19} for a discussion on the application of this result to geometric intersection graphs.
 
Similarly, Feder and Motwani proved that a maximum matching of a bipartite graph given in compressed form as a biclique cover of size $s$ can be found in time $O(\sqrt{n}\cdot s)$~\cite[Theorem 4.2]{FM95}. 
Using a state-of-the-art maximum flow algorithm~\cite{CKLPGS22,BCPKLGSS23}, Cabello, Cheng, Cheong, and Knauer~\cite{CCCK24} gave the following improvement.

\begin{lemma}[\cite{CCCK24}]
\label{lem:matching}
Given a biclique cover of size $s$ of a bipartite graph $G$, the maximum matching problem on $G$ can be solved in time $O(s^{1+\varepsilon})$ for any $\varepsilon > 0$.   
\end{lemma}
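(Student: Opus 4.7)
The plan is to reduce the compressed bipartite matching problem to a compressed maximum flow problem, solve it with a recent near-linear time max flow algorithm, and verify the reduction preserves optimal values.

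First I would build a small flow network $N$ of size $O(s)$ from the biclique cover $\mathcal{H} = \{(A_i, B_i)\}_i$ of the bipartite graph $G = (L \cup R, E)$. Introduce a source $\sigma$ and sink $\tau$. For each $a \in L$ add an arc $\sigma \to a$ of capacity $1$, and for each $b \in R$ add an arc $b \to \tau$ of capacity $1$. For each biclique $(A_i, B_i) \in \mathcal{H}$, introduce a fresh \emph{gadget vertex} $v_i$, add arcs $a \to v_i$ of capacity $1$ for every $a \in A_i$, and arcs $v_i \to b$ of capacity $1$ for every $b \in B_i$. The total number of arcs is $|L| + |R| + \sum_i (|A_i| + |B_i|) = O(n + s) = O(s)$ (assuming, as one may, that $s \geq n$, otherwise pad the cover with trivial bicliques).

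Next I would verify that the maximum $\sigma$-$\tau$ flow in $N$ equals the maximum matching in $G$. For one direction, any matching $M \subseteq E$ yields an integral flow of value $|M|$: for each $(a,b) \in M$, pick a biclique $(A_i, B_i)$ of $\mathcal{H}$ that covers $(a,b)$ (it exists because $\mathcal{H}$ is a cover) and push one unit along $\sigma \to a \to v_i \to b \to \tau$; the capacity-$1$ source and sink arcs guarantee that distinct matched edges use vertex-disjoint paths at the endpoints. For the converse, any integral $\sigma$-$\tau$ flow of value $k$ decomposes into $k$ vertex-disjoint (at $L$ and $R$) unit paths of the form $\sigma \to a \to v_i \to b \to \tau$; each such path witnesses that $a \in A_i$ and $b \in B_i$, hence $(a,b) \in E$, and the collection of these edges forms a matching of size $k$ in $G$.

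Finally, I would invoke the maximum flow algorithm of Chen, Kyng, Liu, Peng, Probst Gutenberg, and Sachdeva~\cite{CKLPGS22,BCPKLGSS23}, which solves max flow on a directed graph with $m$ arcs and polynomially bounded integer capacities in $m^{1+o(1)}$ time. Applied to $N$, this gives running time $s^{1+o(1)} = O(s^{1+\varepsilon})$ for every fixed $\varepsilon > 0$, and the maximum matching itself is recovered in the same time by reading off the supporting paths of the resulting integral flow.

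The only real subtlety is step two, ensuring that the gadget vertices $v_i$ cannot be abused to create a flow exceeding the matching number: this is handled by the unit capacities on the $\sigma \to a$ and $b \to \tau$ arcs, which force each vertex of $L \cup R$ to participate in at most one unit of flow regardless of how many bicliques contain it. Once this point is settled, the rest of the argument is a direct packaging of the max flow black box.
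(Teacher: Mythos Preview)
The paper does not supply its own proof of this lemma; it is cited from Cabello, Cheng, Cheong, and Knauer~\cite{CCCK24}, with the surrounding text indicating only that the result comes from combining Feder and Motwani's compressed-graph reduction with a state-of-the-art near-linear maximum flow algorithm~\cite{CKLPGS22,BCPKLGSS23}. Your proposal carries out exactly that plan---the gadget vertex $v_i$ per biclique is the Feder--Motwani sparsification, the equivalence between max flow in $N$ and max matching in $G$ is argued correctly, and the final step invokes the Chen et al.\ algorithm---so it is correct and matches the approach the paper alludes to.
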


Note that in these results, we assume that a biclique cover is given, and do not take into account the time required to compute it. Ideally, if the graph is given in some kind of implicit form (for instance as a collection of geometric object, of which the graph is the intersection graph), we may hope to be able to compute the biclique cover in time proportional to its size, and then apply the above lemmas.

\subsubsection{Spanners}

Let $t\in \mathbb{N}$ and let $G$ be a graph. A \defn{$t$-hop spanner} is a subgraph $G'$ of $G$ such that for any $uv\in E(G)$, there is a path of length at most $t$ in $G'$ between $u$ and $v$. 
The following was observed by Conroy and T{\'{o}}th~\cite{CT22}. 
\begin{lemma}[\cite{CT22}]
\label{lem:spanner}
If $G$ has a biclique cover of size $s$ then there is a subgraph $G'$ of $G$ which is a $3$-hop spanner with $s$ edges. 
\end{lemma}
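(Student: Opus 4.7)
The plan is to build the spanner $G'$ directly from the given biclique cover $\mathcal{H}$ by a representative-vertex construction. The key observation is that a single star centered at one vertex of a biclique $B$ does not lie in $G$ (edges inside one part of $B$ are not guaranteed to be in $G$), so we instead pick a representative on each side and take the two associated stars.

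Concretely, for each biclique $B \in \mathcal{H}$ with parts $A_B$ and $C_B$, fix an arbitrary representative $a_B \in A_B$ and $c_B \in C_B$, and let $G'$ be the subgraph of $G$ whose edge set is
\[
  \{a_B x : x \in C_B\} \ \cup\ \{c_B y : y \in A_B\},
\]
taken over all $B \in \mathcal{H}$. Every listed edge is an edge of the biclique $B$, hence of $G$, so $G'$ is a well-defined subgraph of $G$.

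I would then carry out two quick verifications. For the edge count, each biclique contributes at most $|C_B| + (|A_B|-1) = |V(B)| - 1$ edges (the edge $a_Bc_B$ is created by both stars and should be counted once), so
\[
  |E(G')| \ \le\ \sum_{B \in \mathcal{H}}\bigl(|V(B)|-1\bigr)\ \le\ \sum_{B \in \mathcal{H}}|V(B)|\ =\ s.
\]
For the spanner property, take any $uv \in E(G)$ and choose a biclique $B$ that covers it, say with $u \in A_B$ and $v \in C_B$. By construction, $u c_B$, $c_B a_B$, and $a_B v$ all lie in $G'$, giving a $u$-$v$ walk of length at most $3$ in $G'$.

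There is no real obstacle here beyond spotting that a one-center star does not suffice, because biclique edges only cross between the two parts; the two-representative construction is the minimal fix, and the counting and path-existence arguments are immediate. I would not expect to need any additional machinery.
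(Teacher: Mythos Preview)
Your proof is correct and follows exactly the approach indicated in the paper: replace each biclique by two stars, one centered on each side of the bipartition. Your edge count and the length-$3$ path verification are both sound.
The only cosmetic remark is that the lemma says ``with $s$ edges'' while you (correctly) prove ``at most $s$ edges''; this is the intended reading and matches the paper.
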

This is achieved by simply replacing every biclique of the decomposition by two stars, with centers on either side of the bipartition.

Biclique covers of intersection graphs of boxes in constant dimension are also used in a recent preprint by Bhore, Chan, Huang, Smorodinsky and T\'{o}th~\cite{BCHST25} to obtain $2$-hop spanners for fat axis-aligned boxes. 

\subsubsection{Biclique covers and range searching}

Biclique covers and partitions of geometric graphs are natural byproducts of \defn{range searching} algorithms and data structures in computational geometry. The range searching problem can be defined as that of preprocessing a collection of points in $\mathbb{R}^d$ so as to quickly answer queries about the subset of points contained in a given region, called the range. There, bicliques naturally arise as pairs of subsets of points and ranges, such that all points are contained in all ranges.
Classical data structures for orthogonal range searching, where ranges are axis-aligned boxes, include segment trees and range trees~\cite[Chapters 5 and 10]{CGbook}. Halfspace and simplex range queries are typically answered using cuttings and partition trees~\cite{MR1115098,MR1174360,MR1194032,MR1220545,MR2901245}. More recently, polynomial partitioning techniques, stemming from the seminal paper from Guth and Katz~\cite{MR3272924}, have been used to solve semialgebraic range queries~\cite{MR3351757,MR4013919,AAEKS22,CS24,AES24}. These structures naturally yield biclique covers of the corresponding incidence graphs.
In turn, running times for the offline range searching problem are closely related to incidence bounds~\cite{MR2177675}, an early example of which is the Szemer\'edi-Trotter theorem bounding the number of incidences between points and lines in the plane. Quoting Agarwal, Ezra, and Sharir~\cite{AES24}: ``there is a general belief that the two problems are closely related, and that the running time of (at least off-line) range queries should be almost the same as the number of incidences between points and the corresponding curves/surfaces that bound these regions''. 
It is therefore not surprising that incidence bounds often coincide with representation complexities.

We summarize the known upper bounds stemming from this line of work.
Let $P$ be a set of points and let $R$ be a set of geometric objects in $\mathbb{R}^d$. An \defn{incidence graph} $I(P,R)$ is a bipartite graph with the bipartition $(P,R)$ where there is an edge between two elements $p\in P$ and $r\in R$ if and only if $p \in r$. We henceforth assume that every set of points or geometric objects contains at most $n$ elements. The following results are known (the $\tilde{O}$ notation omits polylogarithmic factors):
\begin{itemize}
\item Incidence graphs of points and axis-parallel boxes in $\mathbb{R}^d$, for some constant $d$, have biclique partitions of size $O(n \log^d n)$. This can be deduced from \cite[Section 3]{AE99} and \cite[Chapter 5]{CGbook}, for instance.
\item Incidence graphs of points and halfspaces in $\mathbb{R}^d$, for some constant $d$, have biclique partitions of size $\tilde{O}(n^{2d/(d+1)})$~\cite{M92}.  
\item Incidence graphs of points and hyperplanes in $\mathbb{R}^d$, for some constant $d$, have biclique partitions of size $\tilde{O}(n^{2d/(d+1)})$~\cite{BK03}.  
\item Incidence graphs of points and unit disks have biclique partitions of size $\tilde{O}(n^{4/3})$~\cite{MR1471987}.
\item Incidence graphs of points and disks (of arbitrary radii) have biclique partitions of size $\tilde{O}(n^{15/11})$~\cite{AES24}.
\end{itemize}

In all these cases, the biclique covers can be constructed in time proportional to their size.

Results by Erickson~\cite{E96} on Hopcroft's problem make the connection between range searching problems and biclique covers explicit. It is shown, among other results, that the running time of a type of a partitioning algorithm for the counting version of Hopcroft's problem with respect to a set of points $P$ and hyperplanes $H$ is bounded from below by the size of the biclique cover of $I(P,H)$.

Biclique covers also have other applications in computational geometry. 
For instance Agarwal and Varadarajan~\cite{MR1739610} use them to compute approximations of polygonal chains, and biclique partitions of pairs of points at bounded distance from each other are also at the heart of the expander-based optimization method proposed by Katz and Sharir~\cite{MR1471987}.

A graph $G$ is an \defn{intersection graph} for a class of geometric objects if its vertices can be mapped to objects in the class so that two vertices are adjacent if and only if the corresponding objects have a nonempty intersection.
As mentioned by Agarwal et al.~\cite{AES24}, \defn{interval graphs}, intersection graph of intervals on the real line, have biclique partitions of size $O(n \log n)$. More generally, it was shown by Chan~\cite{cha06} that intersection graphs of axis-aligned boxes in $\mathbb{R}^d$ have a biclique cover of size $O(n \log^d n)$. The \defn{boxicity} of a graph $G$ is the minimum $d$ such that $G$ is an intersection graph of axis-aligned boxes in $\mathbb{R}^d$. Hence this result shows that bounded-boxicity graphs have near-linear size biclique covers.
For intersection graphs of unit disks in the plane, a bound of $\tilde{O}(n^{4/3})$ on the size of a biclique cover can be derived from the bound on the size of a biclique cover in the incidence graph between a set of points and unit disks in the plane. 
Segment intersection graphs have biclique covers of size $\tilde{O}(n^{4/3})$~\cite{MR1739610,AES24}.
Finally, Chazelle, Edelsbrunner, Guibas, and Sharir~\cite{CEGS94} proved that the bipartite intersection graph between a set of $n$ disjoint blue segments and a set of $n$ disjoint red segments in the plane has a biclique cover of size $O(n \log^3 n)$.\footnote{The result is not explicitly stated, but can be deduced from their method for reporting intersections, involving so-called \emph{hereditary segment trees}. See Section~\ref{sec:intersection} for details. See also Palazzi and Snoeyink~\cite{PS94}.} 

Many natural classes of geometric graphs are contained in the class of semialgebraic graphs~\cite{MR2156215, MR3217709, MR3646875, MR3425253}.
A graph $G=(V,E)$ is a \defn{semialgebraic graph} of description complexity $t$ and dimension $d$ if the vertices in $V$ can be mapped to points in $\mathbb{R}^d$ so that the presence of an edge is defined by the sign patterns of $t$ polynomial functions of the corresponding pair of points. More precisely, there must exist:
\begin{itemize}
\item a map $\varphi: V\mapsto \mathbb{R}^d$,
\item $t$ polynomials $f_1,\ldots, f_t\in \mathbb{R}[x_1,\ldots, x_d, y_1,\ldots y_d]$ each of degree at most $t$ in $2d$ variables,
\item a boolean function $\Phi:\{\text{T},\text{F}\}^{3t}\rightarrow \{\text{T},\text{F}\}$,
\end{itemize}
such that for any $u,v \in V$, 
\[
uv \in E \Leftrightarrow
\Phi\left(\{f_i(\varphi(u), \varphi(v))<0,f_i(\varphi(u), \varphi(v))=0,f_i(\varphi(u), \varphi(v))\leq 0\}_{i\in [t]}\right)=T.
\]
 
For example, two closed disks of center respectively $(a,b)$ and $(c,d)$ and radius $r$ and $s$, for instance, have a nonempty intersection if and only if $f(a,b,r,c,d,s)\leq 0$, where $f(a,b,r,c,d,s)=(a-c)^2+(b-d)^2-(r+s)^2$. Hence disk intersection graphs are semialgebraic graphs of dimension $d=3$. Note that the union of constantly many semialgebraic graphs is also a semialgebraic graph. 

The following result has been proved by Do~\cite{MR4013919}. 
A computationally efficient version can be found in Agarwal, Aronov, Ezra, Katz, and Sharir~\cite[Corollary A.5]{AAEKS25}.
\begin{theorem}[\cite{MR4013919,AAEKS25}]
Let $G$ be a semialgebraic graph on $n$ vertices of constant description complexity $t$ and constant dimension $d$.
Then for any $\varepsilon > 0$, $G$ has a biclique cover of size
$O(n^{2d/(d+1) + \varepsilon})$, where the constants in the $O$ depend on $\varepsilon$, $t$, and $d$.
\end{theorem}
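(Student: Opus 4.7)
The plan is to follow the standard polynomial-partitioning blueprint from range searching, adapted to the semialgebraic setting. I first reduce to the bipartite case by splitting $V$ into two equal halves $V_1,V_2$ and separately covering the edges inside each half (recursively) and the bipartite edges between them; since the target exponent is strictly greater than $1$, this costs only a constant-factor overhead. So it suffices to cover a bipartite semialgebraic graph with sides $U,W\subseteq\mathbb{R}^d$, $|U|=|W|=n$, where the neighborhood of each $u\in U$ in $W$ corresponds to a fixed semialgebraic set $S_u\subseteq\mathbb{R}^d$ of description complexity bounded in terms of $t$ and $d$.

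Next I apply the Guth--Katz polynomial partitioning theorem to $\varphi(W)$: for a parameter $D$ there exists a polynomial $f$ of degree at most $D$ whose zero set $Z(f)$ cuts $\mathbb{R}^d\setminus Z(f)$ into $O(D^d)$ open cells, each containing $O(n/D^d)$ points of $W$. For each cell $\Delta$ and each $u\in U$ exactly one of three things happens: $\Delta\subseteq S_u$, $\Delta\cap S_u=\emptyset$, or $\partial S_u$ crosses $\Delta$. In the first case I collect $U_\Delta=\{u:\Delta\subseteq S_u\}$ and add the biclique $(U_\Delta,W\cap\Delta)$ to the cover; in the third case I recurse on $(U'_\Delta,W\cap\Delta)$ with $U'_\Delta=\{u:\partial S_u\text{ crosses }\Delta\}$. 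The quantitative input is a Bezout-type bound stating that each $\partial S_u$, being semialgebraic of bounded complexity, crosses only $O(D^{d-1})$ cells of the partition, giving $\sum_\Delta|U'_\Delta|=O(nD^{d-1})$.

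Writing $T(n,m)$ for the worst-case biclique-cover size with $|W|=n$, $|U|=m$, the recursion schematically reads
\[
T(n,m)\;\le\;O(n+m)\;+\;\sum_\Delta T\bigl(O(n/D^d),\,m'_\Delta\bigr),\qquad\sum_\Delta m'_\Delta=O(mD^{d-1}).
\]
Taking $D$ to be a large constant depending on $\varepsilon$, plugging an ansatz of the form $T(n,m)\le C\bigl(n+m+(nm)^{d/(d+1)+\varepsilon}\bigr)$, and using concavity of $x\mapsto x^{d/(d+1)}$ to bound $\sum_\Delta(m'_\Delta)^{d/(d+1)}$, the $D^d$-overhead is absorbed by the $\varepsilon$ slack, the recurrence closes, and one obtains $T(n,n)=O(n^{2d/(d+1)+\varepsilon})$.

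The step I expect to be the main obstacle is handling the points of $W$ lying on the partitioning variety $Z(f)$: in the worst case all of $W$ could lie on $Z(f)$ and the naive recursion collapses. The standard remedy is to iterate polynomial partitioning on the variety itself using Guth's partitioning-on-a-variety theorem, producing a dimensional filtration $\mathbb{R}^d\supseteq Z(f_1)\supseteq Z(f_2)\supseteq\cdots$ of length at most $d$, with each dimension drop costing only a logarithmic factor that is cleanly absorbed into the $n^\varepsilon$ slack. A second delicate point is that a naive accounting of the fully-contained bicliques $(U_\Delta,W\cap\Delta)$ drives up the $m$-side of the recurrence at each level; resolving this relies on a two-level (primal--dual) partitioning scheme, applying an analogous polynomial partitioning in the dual parameter space of the surfaces $\partial S_u$, which is precisely what yields the exponent $2d/(d+1)$ rather than the weaker $2-1/d$ obtainable from simplicial partitioning alone.
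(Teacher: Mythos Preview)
The paper does not give a proof of this theorem: it appears in the background survey and is simply quoted from Do~\cite{MR4013919} and Agarwal, Aronov, Ezra, Katz, and Sharir~\cite{AAEKS25}. There is therefore nothing in the paper to compare your argument against.

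That said, your outline is the standard polynomial-partitioning route used in those references and is sound at the level of a sketch. You correctly isolate the two genuine difficulties: handling the points of $W$ that fall on the partitioning variety (dealt with by iterating Guth's partitioning on a variety, losing only a constant number of dimension drops absorbed in the $n^{\varepsilon}$), and the fact that the naive one-sided recurrence leaks in the accounting of $\sum_\Delta |U_\Delta|$ for the ``fully contained'' bicliques, which is why a two-sided (primal--dual) partition or an equivalent balancing is needed to reach the exponent $2d/(d+1)$ rather than $2-1/d$. One small clarification: the problematic term is not that the $m$-side of the \emph{recursive} calls blows up (you already bounded $\sum_\Delta m'_\Delta=O(mD^{d-1})$ correctly), but that $\sum_\Delta |U^+_\Delta|$ for the output bicliques can be $\Theta(mD^d)$ in a one-sided scheme; alternating the partitioned side, or partitioning both sides simultaneously, is what fixes this.
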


Chan, Cheng, and Zheng ~\cite[Final Remarks]{CCZ24} showed that the intersection graph of $n$ algebraic arcs in the plane of constant description complexity has a biclique cover of size $O(n^{3/2+\varepsilon})$ and can be found in a similar running time for any $\varepsilon>0$. 

\subsubsection{Ordered graphs and visibility graphs}

Visibility is a classical theme in computational geometry~\cite[Chapter 15]{CGbook}~\cite{G07,OR17}.
A visibility graph is typically defined on a set of points in the plane, such that two points form an edge if and only if the line segment between them does not hit any obstacle.
\defn{Polygon visibility graphs}, for instance, are defined on the set of vertices of a simple closed polygon, and two vertices are adjacent in the graph if and only if the line segment between them is contained in the polygon.
A notable result due to Agarwal, Alon, Aronov, and Suri~\cite{MR1298916} states that polygon visibility graphs have biclique partitions of size $O(n\log^3 n)$.
The proof involves an reduction to bichromatic segment intersection graphs~\cite{CG89} and their compact representation based on segment trees by Chazelle et al.~\cite{CEGS94}.

In what follows we consider \defn{ordered graphs}, defined as graphs $G=(V,E)$ where the set of vertices $V$ is totally ordered. 
For simplicity, we will often assume that $V = [n] =: \{1, 2, \ldots , n\}$ with the natural ordering on $\mathbb{N}$.
An ordered graph $([n], E)$ is a \defn{capped graph} if for any four vertices $i<j<k<\ell$, if $ik\in E$ and $j\ell\in E$, then $i\ell\in E$~\cite{MR4562782}. 
See~\Cref{fig:capped} for an illustration. 

\begin{figure}[!h]
\begin{center}
    \includegraphics[scale=.7, page=1]{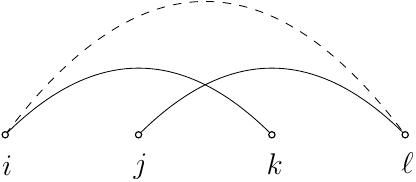}
\end{center}
\caption{\label{fig:capped}The forbidden ordered subgraph in a capped graph. The dashed curve represents a non-edge: If $ik$ and $j\ell$ are edges, then $i\ell$ must be an edge.}
\end{figure}

This property is also sometimes referred to as the \defn{$X$-property}, and capped graphs are also referred to as \defn{terrain-like graphs}~\cite{MR4292762,MR4329054,MR4623911}.

A \defn{terrain visibility graph} is defined on the set of vertices of an $x$-monotone polygonal line in the plane, also called a terrain, where two vertices are adjacent in the graph if and only if the open line segment between them lies completely above the terrain~\cite{MR1344740,MR3355568,MR4117719,MR4700795}.  
Terrain visibility graphs have applications to time series analysis~\cite{doi:10.1073/pnas.0709247105}, and have been used for instance in medical contexts~\cite{AAA10,doi:10.1177/1550059412464449}.

It is not difficult to realize that terrain visibility graphs are capped graphs. Indeed, if we are given four points in the order $i<j<k<\ell$ along the terrain such that there is a line of sight between $i$ and $k$ and between $j$ and $\ell$, then there must be one between $i$ and $\ell$.
In fact, capped graphs also contain induced subgraphs of terrain visibility graphs, as well as visibility graphs involving points on an arbitrary, not necessary polygonal, $x$-monotone curve. See Figure~\ref{fig:terrain} for an example.

\begin{figure}[!h]
\begin{center}
    \includegraphics[scale=.6, page=1]{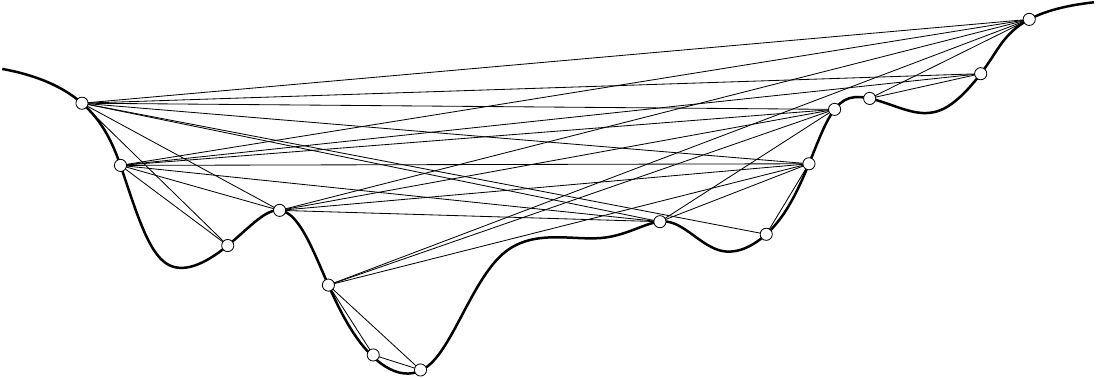}
\end{center}
\caption{\label{fig:terrain}The visibility graph of a set of points on an $x$-monotone curve. When vertices are ordered from left to right, this is an example of capped graph.}
\end{figure}

\defn{Persistent graphs} are capped graphs that also satisfy the so-called \defn{bar property}: for any edge of the form $ik$ such that $k\geq i+2$, there exists $j$ such that $i<j<k$ and both $ij$ and $jk$ are also edges~\cite{MR4117719,FR21}.
It is known that terrain visibility graphs are also persistent, and it was once conjectured that they form the same class. This conjecture was disproved by Ameer, Gibson-Lopez, Krohn, Soderman, and Wang~\cite{MR4117719}. 

Algorithms on terrain visibility graphs and terrain-like graphs have been studied by Katz,  Saban, and Sharir~\cite{KSS24}, Froese and Renken~\cite{MR4292762}, and De Berg, Van Beusekom, Van Mulken, Verbeek, and Wulms~\cite{BBMVW24}. The Zarankiewicz problem for polygon visibility graphs was recently studied by Ackerman and Keszegh~\cite{AK25}.  

\subsection{Plan of the paper}

In Section~\ref{sec:comp}, we construct near-linear biclique partitions for comparability graphs and bigraphs of bounded dimension. This will be used as a building block for many of the subsequent results. In Section~\ref{sec:semilinear}, we state and prove our main result on the existence and construction of near-linear biclique partitions for any semilinear graphs. In Section~\ref{sec:terrain}, we give our main result on terrain-like graphs. Section~\ref{sec:intersection} presents a number of related results on specific classes of intersection graphs, in particular intersection graphs of L-shapes in the plane. We also take the opportunity to detail the construction of small biclique covers for intersection graphs of bichromatic line segments.
In Section~\ref{sec:lb} we consider lower bounds on the size of biclique covers.

\section{$d$-dimensional comparability graphs}
\label{sec:comp}

A graph $G=(V,E)$ is a \defn{comparability graph} if there exists a partial ordering $P(G)$ of $V$ such that two vertices are adjacent if and only if they are comparable in $P(G)$. 
One may wonder if comparability graph could have small biclique covers.
A simple counting argument rules this out: all bipartite graphs are comparability graphs, and there are $2^{\Omega(n^2)}$ bipartite graphs on $n$ vertices. From Observation~\ref{obs:graph_count}, there must exist comparability graphs all biclique covers of which have size $\Omega(n^2/\log n)$, which from \Cref{thm:gengraphs} is not better than for general graphs.

We therefore restrict our attention to \defn{$d$-dimensional comparability} graphs. A partial ordering $P$ has dimension $d$ if there is a collection $\mathcal{L}=
\{L_1,L_2,\ldots,L_d\}$ of linear extensions of $P$ such $P=\bigcap_{i=1}^d L_i$.  
A comparability graph $G$ has dimension $d$ if there is a partial ordering $P(G)$ of $V$ of dimension $d$. 
Equivalently, a $d$-dimensional comparability graph is a graph $G=(V,E)$ for which there exists a map $\varphi: V\mapsto\mathbb{R}^d$ such that any two vertices $v,u\in V$ are adjacent if and only if they are comparable with respect to $\varphi$, that is either $\varphi (v)\prec \varphi (u)$ or $\varphi(v)\succ \varphi(u)$, where $a\prec b$ if and only if $a_i < b_i$ for all $i \in [d]$.  
We can assume without loss of generality that no pair of points in $\varphi(V)$ share a coordinate.
See \Cref{fig:comp} for an example of a two-dimensional comparability graph.
Note that two-dimensional comparability graphs  are also known as \defn{permutation graphs}~\cite{MR1686154}.

\begin{figure}[t]
\centering
\subcaptionbox{A two-dimensional comparability graph.} {\includegraphics[scale=.4, page=2]{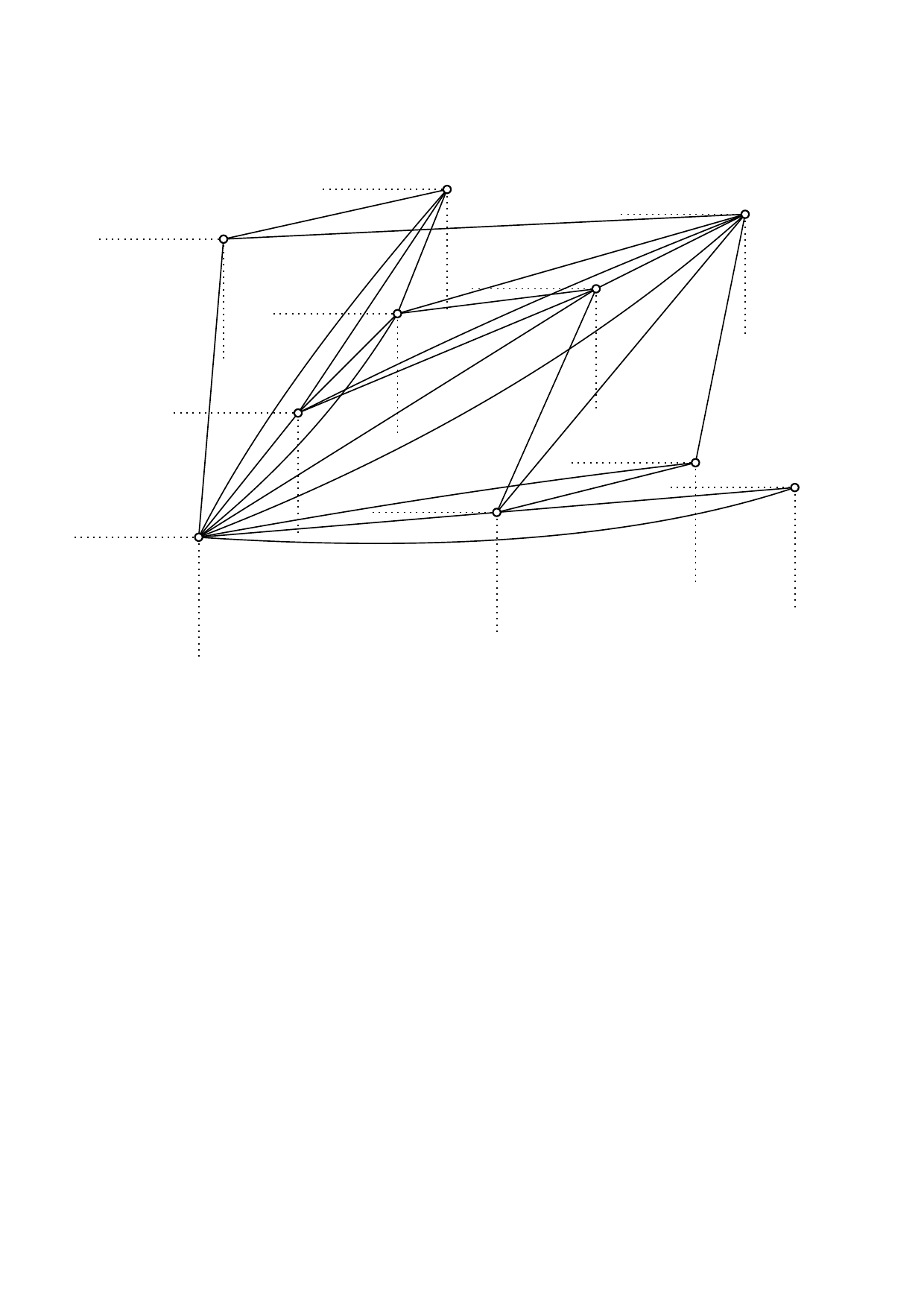}}
\subcaptionbox{A two-dimensional comparability bigraph.} 
{\includegraphics[scale=.4, page=3]{2dimcomp.pdf}}
\caption{\label{fig:comp}Comparability graphs and bigraphs.}
\end{figure}
We also define a bipartite version of a $d$-dimensional comparability graph.
A \defn{$d$-dimensional comparability bigraph} is a bipartite graph $G=(L\cup R, E)$ for which there exists a map $\varphi: L\cup R\mapsto\mathbb{R}^d$ such that no pair of points in $\varphi(L)\times\varphi(R)$ share a coordinate, and such that for any pair $(\ell ,r)\in L\times R$, $\ell r\in E$ if and only if $\varphi (\ell)\prec \varphi (r)$.
We also adopt the convention that $0$-dimensional comparability bigraphs are complete bipartite graphs, where $E=L\times R$.
See \Cref{fig:comp} for an example of a two-dimensional comparability graph.
For both graph classes, we refer to the function $\varphi$ as the \defn{embedding map} of $G$. 

We proceed by showing two key theorems concerning the size of a biclique cover of $d$-dimensional comparability bigraphs and graphs. 
The proofs rely on a simple induction for the \defn{dominating pairs} problem~\cite{MR805539,Chan08}.

\begin{theorem}\label{thm:dddbigraph}
    Any $d$-dimensional comparability bigraph on $n$ vertices has a biclique partition of size $O(n \log^d n)$.
\end{theorem}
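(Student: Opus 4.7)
The plan is to proceed by induction on the dimension $d$, using a divide-and-conquer scheme on the first coordinate of the embedding map, analogous to the standard recursion for dominance counting or range trees.

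For the base case $d=0$, the convention in the paper tells us that a $0$-dimensional comparability bigraph is the complete bipartite graph $L\times R$, which is itself a single biclique of size $|L|+|R|\le n$. So $T(n,0)\le n$, where $T(n,d)$ denotes the worst-case size of a biclique partition of a $d$-dimensional comparability bigraph on $n$ vertices.

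For the inductive step with $d\ge 1$, order all $n$ vertices of $L\cup R$ by their first coordinate under $\varphi$ (no ties by convention), and let $A$ be the $\lfloor n/2\rfloor$ vertices with the smallest first coordinate and $B$ the remaining $\lceil n/2\rceil$. Every edge $\ell r\in E$ satisfies $\varphi(\ell)_1<\varphi(r)_1$, so it falls into exactly one of three classes: (i) both endpoints in $A$, (ii) both endpoints in $B$, or (iii) $\ell\in L\cap A$ and $r\in R\cap B$. Classes (i) and (ii) are $d$-dimensional comparability bigraphs on at most $n/2$ vertices each, handled recursively. For class (iii), the inequality on the first coordinate is already implied by the split, so the remaining $d-1$ coordinates give a $(d-1)$-dimensional comparability bigraph on at most $n$ vertices, again handled recursively. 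The union of the three partitions is clearly a partition of $E$.

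This yields the recurrence
\[
T(n,d)\le 2T(n/2,d) + T(n,d-1),\qquad T(n,0)\le n,
\]
which, by a standard unfolding (identical to the analysis of $d$-dimensional range trees), solves to $T(n,d)=O(n\log^d n)$, as desired. The only mildly delicate point in writing this out is verifying that classes (i)--(iii) form a \emph{partition} rather than just a cover, and that class (iii) genuinely has its first coordinate constraint removed, so that the recursion on $d-1$ is sound; both follow directly from the median split and the definition of $\prec$. No additional obstacle arises.
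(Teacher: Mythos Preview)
Your proof is correct and essentially identical to the paper's: both induct on $d$ with base case $d=0$, split the vertex set by a median coordinate, observe that the cross edges form a $(d-1)$-dimensional comparability bigraph, and recurse on both halves. The only cosmetic differences are that the paper splits on the $d$th coordinate rather than the first, and writes the recurrence as $S(n)\le 2S(\lceil n/2\rceil)+O(n\log^{d-1}n)$ with the inner induction already applied, rather than the two-parameter form $T(n,d)\le 2T(n/2,d)+T(n,d-1)$.
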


\begin{proof}
We prove the theorem by induction on $d$. 
For $d=0$, the bigraph is a biclique, hence has a trivial partition of size $O(n)$.

Now let $d\ge 1$ and assume that the theorem holds for any $d'<d$.  
Let $G$ be a $d$-dimensional comparability bigraph and let $\varphi$ be its embedding map in $\mathbb{R}^d$.
Let $p\in \mathbb{R}$ be a point on the $d$th axis such that the orthogonal hyperplane $H$ through $p$ divides the points in $L\cup R$ into two parts of size at most $\lceil n/2\rceil$. 
Let $\varphi_d$ be the restriction of $\varphi$ to the $d$th coordinate and let $\varphi_{1,\ldots,d-1}$ be the restriction of $\varphi$ to the first $(d-1)$ coordinates. 
Let $L'\subseteq L$ be the set of vertices $\ell \in L$ for which $\varphi_d(\ell)<p$ and let $R'\subseteq R$ be the of vertices $r\in R$ for which $\varphi_d(r)>p$. 
Observe that $\ell r\in E$, for $\ell \in L'$ and $r\in R'$, if and only if $\varphi_{1,\ldots,d-1}(\ell)\prec \varphi_{1,\ldots,d-1}(r)$.
Let $G'$ be a $(d-1)$-dimensional comparability bipartite subgraph of $G$ induced on $L'\cup R'$ with the embedding map $\varphi_{1,\ldots,d-1}$ obtained by projecting those points on $H$.
By induction, $G'$ has a biclique partition of size $O(n\log^{d-1} n)$. 
It remains to recurse twice on each side of $H$ with half the points. 
Let $S(n)$ be the size of a biclique partition of $G$.
The total size $S(n)$ therefore obeys the recurrence
\[
S(n) \leq 2 S(\lceil n/2\rceil) + O(n \log^{d-1} n) = O(n\log^d n). \qedhere
\]
\end{proof}

The same method gives an upper bound on the size of a biclique partition of $d$-dimensional comparability graphs.
We omit the proof here.

\begin{theorem}\label{thm:dddgraph}
    Any $d$-dimensional comparability graph on $n$ vertices has a biclique partition of size $O(n \log^d n)$.
\end{theorem}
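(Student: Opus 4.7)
The plan is to mimic the recursive proof of \Cref{thm:dddbigraph}, using \Cref{thm:dddbigraph} itself as the workhorse that handles the ``cross'' edges at each recursion step. Induction will be on the number of vertices $n$, with $d$ held fixed (alternatively, an induction on $d$ with the bigraph theorem as the inductive ingredient).

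Let $G$ be a $d$-dimensional comparability graph on $n$ vertices with embedding map $\varphi: V \to \mathbb{R}^d$, and assume without loss of generality that all coordinates of points in $\varphi(V)$ are distinct. Pick $p \in \mathbb{R}$ on the $d$-th axis so that the hyperplane $H$ orthogonal to the $d$-th axis through $p$ separates $V$ into two sets $V_1, V_2$ of size at most $\lceil n/2\rceil$, where $V_1 = \{v : \varphi_d(v) < p\}$ and $V_2 = \{v : \varphi_d(v) > p\}$. Edges of $G$ now split into three groups: edges inside $V_1$, edges inside $V_2$, and edges crossing $H$.

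For the crossing edges, note that if $u \in V_1$ and $v \in V_2$ then $\varphi_d(u) < \varphi_d(v)$, so $u$ and $v$ are comparable if and only if $\varphi_{1,\ldots,d-1}(u) \prec \varphi_{1,\ldots,d-1}(v)$. Thus the bipartite graph of crossing edges, with bipartition $(V_1,V_2)$ and embedding obtained by projecting onto the first $d-1$ coordinates, is a $(d-1)$-dimensional comparability bigraph on $n$ vertices. By \Cref{thm:dddbigraph}, it admits a biclique partition of size $O(n \log^{d-1} n)$. The edges inside $V_1$ and inside $V_2$ are handled by recursing on the induced $d$-dimensional comparability graphs on at most $\lceil n/2\rceil$ vertices each (the restriction of $\varphi$ still serves as the embedding map). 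Letting $S(n)$ denote the bound on the partition size, we obtain
\[
S(n) \leq 2 S(\lceil n/2 \rceil) + O(n \log^{d-1} n),
\]
which unrolls to $S(n) = O(n \log^d n)$. The base case is immediate: for $n = 1$ there are no edges, and for $d=1$ the graph is $K_n$, itself partitionable into bicliques of total size $O(n \log n)$ by the standard halving construction recalled in the introduction.

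There is no real obstacle here; the only thing to verify carefully is that the three edge classes (inside $V_1$, inside $V_2$, across $H$) really partition $E(G)$ and that the bipartite cross-graph qualifies as a $(d-1)$-dimensional comparability bigraph under the projection, which follows from the defining property of the product order. This is the same observation that makes the bigraph induction work, now used as a single ``peeling'' step instead of the whole induction.
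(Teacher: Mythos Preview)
Your proof is correct and is essentially the approach the paper intends by ``the same method'': split on the $d$th coordinate, observe that the crossing edges form a $(d-1)$-dimensional comparability bigraph (handled by \Cref{thm:dddbigraph}), and recurse on the two halves to obtain the same recurrence $S(n)\le 2S(\lceil n/2\rceil)+O(n\log^{d-1}n)$.
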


\section{Semilinear graphs}
\label{sec:semilinear}

Semilinear graphs were recently introduced by Basit, Chernikov, Starchenko, Tao and Tran~\cite{BCSTT21}, and form a subclass of semialgebraic graphs.
A graph $G=(V,E)$ is a \defn{semilinear graph of complexity $t$} if the vertices in $V$ can be mapped to points in $\mathbb{R}^d$ and there exist $t$ \emph{linear} functions $f_1,\ldots,f_t:\mathbb{R}^d\times \mathbb{R}^d\rightarrow\mathbb{R}$ such that the edges of $G$ are defined by the sign patterns of those functions. More precisely, let $\text{F}$ and $\text{T}$ denote false and true respectively. 
Then $G$ is semilinear if there exist:
\begin{itemize}
\item a map $\varphi: V\mapsto \mathbb{R}^d$,
\item $t$ linear functions $f_1,\ldots, f_t\in \mathbb{R}[x_1,\ldots, x_d, y_1,\ldots y_d]$ in $2d$ variables,
\item a boolean function $\Phi:\{\text{T},\text{F}\}^{3t}\rightarrow \{\text{T},\text{F}\}$,
\end{itemize}
such that for any $u,v \in V$, 
\[
uv \in E \Leftrightarrow
\Phi\left(\{f_i(\varphi(u), \varphi(v))<0,f_i(\varphi(u), \varphi(v))=0,f_i(\varphi(u), \varphi(v))\leq 0\}_{i\in [t]}\right)=T.
\]

The bounded-dimensional comparability graphs of the previous section are simple examples of semilinear graphs. Interval graphs are another. Indeed, two intervals $[a,b]$ and $[c,d]$ intersect if and only if $a\leq d$ and $b\geq c$. A similar condition can be written for the intersection of two boxes in $\mathbb{R}^d$. Furthermore the union of constantly many such graphs is also a semilinear graph.

Tomon~\cite{T23} observed that we can restrict the definition above to boolean formulas in disjunctive normal form, and involving only strict inequalities.
A graph $G=(V,E)$ is \defn{dnf-semilinear of complexity $(t,\ell)$} if there exist a map $\varphi: V\mapsto \mathbb{R}^d$ and $t\cdot \ell$ {\em linear} functions $f_{i, j}:\mathbb{R}^d\times \mathbb{R}^d\rightarrow\mathbb{R}$, $(i,j)\in [\ell]\times [t]$, such that for any $u, v \in V$,
\begin{equation}
uv \in E \Leftrightarrow
\bigvee_{i\in [\ell]} \left(\bigwedge_{j\in [t]} f_{i, j}(\varphi(u), \varphi(v))<0 \right)=T.
\end{equation}

As shown in \cite{T23}, any semilinear graph of complexity $t$ is a dnf-semilinear graph of complexity $(t',\ell)$ where $t'$ and $\ell$ depend only on $t$, and therefore the two definitions are essentially equivalent.
The second definition will be useful in our proofs.

\begin{theorem}\label{thm:semilinear}
    Let $G=(V,E)$ be a semilinear graph of constant complexity on $n$ vertices. 
    Then $G$ has a biclique cover of size $O(n \polylog n)$.
\end{theorem}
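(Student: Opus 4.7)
The plan is to reduce the problem to the bipartite comparability setting of Theorem~\ref{thm:dddbigraph}, exploiting the disjunctive normal form provided by Tomon's equivalent definition.

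First, I would invoke Tomon's result to assume $G$ is dnf-semilinear of complexity $(t,\ell)$ with $t,\ell = O(1)$, so that
\[
uv \in E \iff \bigvee_{i\in[\ell]}\bigwedge_{j\in[t]} f_{i,j}(\varphi(u), \varphi(v))<0.
\]
Writing $G$ as a union $G_1 \cup \cdots \cup G_\ell$, where $G_i$ captures the $i$-th disjunct, and recalling that a biclique cover of a union of graphs is obtained by concatenating biclique covers of each piece, it suffices to build a biclique cover of each $G_i$ of size $O(n \polylog n)$ and take the union, at the cost of only a constant factor $\ell$.

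Next I would transfer a single conjunction to a bipartite problem on two labelled copies $V_L$ and $V_R$ of $V$. For a fixed $i$, form the bipartite graph $H$ on $V_L \sqcup V_R$ by placing an edge $(u, v) \in V_L \times V_R$ precisely when $\bigwedge_{j\in[t]} f_{i,j}(\varphi(u), \varphi(v))<0$ holds for the ordered pair $(u,v)$. Since every edge $uv \in E(G_i)$ is witnessed by at least one of the two orderings of $\{u,v\}$, any biclique cover of $H$ lifts to a biclique cover of $G_i$ after identifying $V_L$ with $V_R$: a biclique $A \sqcup B$ of $H$ yields the biclique $(A \setminus B) \sqcup B$ in $G_i$, together with a separate biclique cover of the complete graph induced on the overlap $A \cap B$ (of size $O(|A\cap B|\log|A\cap B|)$ by the standard recursive splitting of $K_n$). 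Summing over the partition increases the total size by at most a logarithmic factor.

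The heart of the proof is the linearization of the conjunction. Writing each $f_{i,j}(x,y) = a_{i,j}\cdot x + b_{i,j}\cdot y + c_{i,j}$ with $a_{i,j}, b_{i,j} \in \mathbb{R}^d$ and $c_{i,j}\in\mathbb{R}$, the inequality $f_{i,j}(\varphi(u),\varphi(v))<0$ rewrites as $a_{i,j}\cdot\varphi(u) < -b_{i,j}\cdot\varphi(v)-c_{i,j}$. Defining
\[
\psi_L(u) := \bigl(a_{i,j}\cdot \varphi(u)\bigr)_{j\in[t]} \in \mathbb{R}^t, \qquad \psi_R(v) := \bigl(-b_{i,j}\cdot \varphi(v)-c_{i,j}\bigr)_{j\in[t]} \in \mathbb{R}^t,
\]
the conjunction is exactly the componentwise strict inequality $\psi_L(u) \prec \psi_R(v)$. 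A sufficiently small generic perturbation of $\varphi$ leaves every witnessed strict inequality intact (so the edge set of $G$ is unchanged) while ensuring that no pair in $\psi_L(V_L)\times\psi_R(V_R)$ shares a coordinate. The graph $H$ is then a $t$-dimensional comparability bigraph, and Theorem~\ref{thm:dddbigraph} provides a biclique partition of $H$ of size $O(n\log^t n)$. Combining with the preceding reductions gives a biclique cover of $G$ of total size $O(\ell \cdot n\log^{t+1} n) = O(n\polylog n)$.

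The step that most warrants care is this linearization together with the genericity argument: one has to check that a single joint perturbation of $\varphi$ simultaneously destroys all finitely many coordinate coincidences between $\psi_L(V_L)$ and $\psi_R(V_R)$ without flipping any strict inequality determining an edge, and that the overlap cliques $A \cap B$ introduced by the identification $V_L = V_R$ contribute only a logarithmic overhead to the total size. Once both are verified, the bulk of the work is carried out by the preparatory Theorem~\ref{thm:dddbigraph}, which is precisely why it is developed as a standalone building block.
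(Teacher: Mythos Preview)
Your plan is essentially the paper's: pass to dnf form, treat each disjunct $G_i$ separately, and use linearity to rewrite $\bigwedge_{j} f_{i,j}(x,y)<0$ as a coordinatewise dominance $\psi_L(x)\prec\psi_R(y)$ in $\mathbb{R}^t$, so that Theorem~\ref{thm:dddbigraph} applies. The difference is only in how you return from the ordered bipartite problem to the undirected $G_i$. The paper recursively halves $V$ into disjoint sets $L,R$; the cut edges of $G_i$ then form the union of two $t$-dimensional comparability bigraphs (one for each direction of dominance), each covered via Theorem~\ref{thm:dddbigraph}, and one recurses on $G_i[L]$ and $G_i[R]$. This gives the same $O(n\log^{t+1}n)$ per disjunct but never needs to identify two copies of $V$.

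Your duplication $V_L\sqcup V_R$ followed by identification has a concrete gap in the lifting step. From a biclique $A\times B$ of $H$ you keep $(A\setminus B)\times B$ together with a cover of the clique on $A\cap B$, but this omits every edge $\{a,b\}$ with $a\in A\cap B$ and $b\in B\setminus A$: neither endpoint lies in $A\setminus B$, and the pair is not contained in $A\cap B$. The repair is cheap---also include the biclique $(A\cap B)\times(B\setminus A)$, at constant-factor cost---but as written the cover is incomplete. A second, subtler point: perturbing $\varphi$ can \emph{create} edges of $H$, not merely preserve them (if some $f_{i,j}(\varphi(u),\varphi(v))=0$ while the remaining $f_{i,j'}$ are negative, the conjunction is false but becomes true after a perturbation that pushes that value below zero), so a biclique of the perturbed bigraph need not lift to a biclique of $G$. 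The honest fix is to note that the proof of Theorem~\ref{thm:dddbigraph} does not actually rely on the no-shared-coordinate clause; the paper glosses over this point as well. The recursive-halving route is cleaner precisely because $L$ and $R$ are disjoint by construction, so neither issue arises.
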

\begin{proof}
    From the above discussion, we can assume that $G$ is dnf-semilinear of complexity $(t,\ell)$, for some constants $t$ and $\ell$. 
    Consider a function $f_{i,j}$ appearing in this definition, for some $(i,j)\in [\ell]\times [t]$.
    Since $f_{i, j}$ is linear, we can rewrite it as  $f_{i,j}(x,y)=g_{i,j}(x)+h_{i,j}(y)$ with suitable functions $g_{i, j}, h_{i, j}:\mathbb{R}^d\rightarrow \mathbb{R}$. 
    Let $g_i(x) = (g_{i,1}(x),\ldots ,g_{i,t}(x))$ and $h_i(y) = (h_{i,1}(y),\ldots ,h_{i,t}(y))$.
    Now the condition $f_{i,j}(x,y)<0$ can be rewritten $g_{i, j} (x) < -h_{i, j} (y)$, and
    \[
    \bigwedge_{j\in [t]} f_{i, j}(x, y)<0
    \Leftrightarrow
    g_i (x) \prec -h_i (y).
    \]

    This condition defines a subgraph $G_i$ of $G$ whose edges are the pairs $u,v\in V$ such that either $g_i (\varphi(u)) \prec -h_i (\varphi(v))$ or 
    $g_i (\varphi(v)) \prec -h_i (\varphi(u))$.

    We now show that $G_i$ has a biclique partition of size $O(n\log^{t+1} n)$.
    Let us split the set $V$ of vertices into two subsets $L$ and $R$ of size at most $\lceil n/2\rceil$.
    The graph whose edges are the pairs $(u,v)\in L\times R$ such that $g_i (\varphi(u)) \prec -h_i (\varphi(v))$ is a $t$-dimensional comparability bigraph. Similarly, the graph whose edges are the pairs $(u,v)\in L\times R$ such that $-h_i (\varphi(u)) \succ g_i (\varphi(v))$ is also a $t$-dimensional comparability bigraph. These two graphs cover the edges of $G_i$ that are in the cut $(L,R)$. 
    From \Cref{thm:dddgraph}, they each have a biclique cover of size $O(n\log^t n)$.
    It remains to cover the edges contained in $L$ and $R$ recursively. This yields a biclique cover of size $O(n\log^{t+1} n)$, proving our claim.
    
    $G$ is the union of the $\ell$ graphs $G_i$ for $i\in [\ell]$. Taking the union of the biclique covers for each of the $G_i$ yields a biclique cover of size $O(n \log^{t+1}n)$ for $G$, as required. 
\end{proof}

Combined with Observation~\ref{obs:K_ttfreeLB}, this directly implies the result from Basit, Chernikov, Starchenko, Tao,  and Tran~\cite{BCSTT21}. 

\begin{corollary}[\cite{BCSTT21}]
    Let $G$ be a semilinear graph without a $K_{t,t}$ subgraph, for some $t\in\mathbb{N}$. Then $G$ has at most $O(n\polylog n)$ edges.
\end{corollary}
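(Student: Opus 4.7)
The statement is essentially immediate from the two ingredients already assembled in the paper, so my proposal is just to specify how they fit together and verify that no parameter blow-up spoils the bound.

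First I would invoke \Cref{thm:semilinear} to obtain, for the given semilinear graph $G$ of constant complexity on $n$ vertices, a biclique cover $\mathcal{H}$ of size $s = O(n \polylog n)$. The constant hidden in the $O(\cdot)$ depends only on the complexity parameters $(t', \ell)$ arising from the semilinear description of $G$, so treating the complexity as fixed is harmless.

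Next I would apply \Cref{obs:K_ttfreeLB} to this cover. Since $G$ contains no $K_{t,t}$ subgraph, every biclique $H \in \mathcal{H}$ satisfies $|E(H)| \leq t \cdot |V(H)|$; summing over $\mathcal{H}$ yields
\[
|E(G)| \;\leq\; \sum_{H \in \mathcal{H}} |E(H)| \;\leq\; t \sum_{H \in \mathcal{H}} |V(H)| \;=\; t \cdot s.
\]
With $t$ constant (it is a fixed parameter of the hypothesis), this gives $|E(G)| = O(t \cdot n \polylog n) = O(n \polylog n)$, which is the claimed bound.

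There is essentially no obstacle here beyond bookkeeping: the only thing worth double-checking is that the polylogarithmic exponent produced by \Cref{thm:semilinear} (which is $t'+1$ for the dnf-semilinear reformulation of complexity $(t', \ell)$) depends only on the semilinear complexity of $G$ and not on the Zarankiewicz parameter $t$, so that the statement is genuinely of the form $O_t(n \polylog n)$ with polylog exponent controlled by the complexity of $G$ alone.
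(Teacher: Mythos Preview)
Your proposal is correct and matches the paper's approach exactly: the paper simply states that combining \Cref{thm:semilinear} with \Cref{obs:K_ttfreeLB} directly yields the corollary, which is precisely what you spell out.
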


Also note that the proof of Theorem~\ref{thm:semilinear} provides a construction algorithm running in time proportional to the output size.
This holds provided that the $d$-dimensional comparability bigraph in Theorem~\ref{thm:dddbigraph} and the semilinear graph in Theorem~\ref{thm:semilinear} are given in the following implicit form: Every vertex is encoded as a point in $\mathbb{R}^d$, and the functions determining the presence of an edge are encoded in constant space.

\begin{lemma}
\label{lem:compsemilinear}
    Given a semilinear graph $G$ of constant complexity, we can compute a biclique partition of $G$ of size $O(n\polylog n)$ in time $O(n\polylog n)$.
\end{lemma}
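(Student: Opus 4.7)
The plan is to make both the recursive construction of \Cref{thm:dddbigraph} and the reduction of \Cref{thm:semilinear} fully algorithmic, and to verify that every step other than writing out the bicliques runs in linear time per recursion level.

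First I would give an algorithmic version of \Cref{thm:dddbigraph}: given the embedding $\varphi$ of a $d$-dimensional comparability bigraph, use linear-time median selection to find the splitting coordinate $p$ along the $d$th axis, separate $L'$ and $R'$ in $O(n)$ time, and recurse. At the base case $d=0$ the algorithm writes the current complete bipartite graph as two vertex lists, which takes time proportional to its size. The running time $T(n,d)$ obeys
\[
T(n,d) \leq 2\,T(\lceil n/2 \rceil, d) + T(n, d-1) + O(n), \qquad T(n,0) = O(n),
\]
which solves to $T(n,d) = O(n\log^d n)$, matching the output size. The same scheme handles \Cref{thm:dddgraph}.

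Next I would implement the reduction of \Cref{thm:semilinear}. Since the complexity is constant, converting the input description into dnf-semilinear form with constants $(t,\ell)$ is $O(1)$ preprocessing. For each of the $\ell$ DNF clauses, the maps $g_i$ and $h_i$ are explicit linear functions, so their values at all $n$ embedded vertices can be computed in $O(n)$ time; the recursive halving of $V$ then reduces the cut edges of $G_i$ to two $t$-dimensional comparability bigraph instances per level, each handled in $O(n\log^t n)$ time by the previous paragraph. This yields $O(n\log^{t+1} n)$ time per clause, and summing over the $\ell = O(1)$ clauses gives the claimed $O(n\polylog n)$ bound.

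The only mildly delicate point is ensuring that output formatting does not dominate the running time. Bicliques are emitted as explicit vertex lists on each side of the bipartition, so the total output length equals the partition size. All other per-level work (median-finding, splitting vertex sets, evaluating $g_i$ and $h_i$) is $O(n)$ and contributes only lower-order factors. To obtain a genuine partition rather than just a cover, as the lemma asserts, one refines the $\ell$ DNF clauses into a constant number of mutually disjoint cases by intersecting each clause with the negations of the earlier ones before applying the recursive construction; this multiplies $\ell$ by a constant depending only on the original complexity and therefore preserves the $O(n\polylog n)$ time and output-size bounds.
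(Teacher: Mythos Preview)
Your approach is essentially the one the paper intends: the paper gives no separate proof of this lemma, merely observing just before its statement that the constructions in Theorems~\ref{thm:dddbigraph} and~\ref{thm:semilinear} are algorithmic and run in time proportional to the output size. Your fleshing-out with linear-time median selection and the recurrence $T(n,d)\le 2T(\lceil n/2\rceil,d)+T(n,d-1)+O(n)$ is exactly the right way to make that remark precise.

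One small gap remains in your partition argument. You correctly disjoin the $\ell$ DNF clauses, but even within a single clause $G_i$ the edges across the cut $(L,R)$ are handled by \emph{two} $t$-dimensional comparability bigraphs---one for the condition $g_i(\varphi(u))\prec -h_i(\varphi(v))$ with $u\in L$, $v\in R$, and one with the roles reversed---and these two bigraphs can share edges. So disjoining the clauses alone does not yet yield a partition. The same trick you already described applies here: restrict the second bigraph to edges not satisfying the first condition, which adds at most $t$ further linear constraints per clause and therefore only increases the dimension and the number of sub-clauses by constants depending on the original complexity. With that adjustment your argument is complete.
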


Combining Lemma~\ref{lem:compsemilinear} with the known computational results of Lemmas~\ref{lem:apsp} and \ref{lem:matching}, we obtain the following.

\begin{theorem}
    Given a semilinear graph $G$ of constant complexity on $n$ vertices, we can compute a maximum matching of $G$ in time $O(n^{1+\varepsilon})$, and solve the all-pairs shortest path problem on $G$ in time $O(n^2\polylog n)$.
\end{theorem}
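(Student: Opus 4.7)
The plan is that this theorem is essentially an immediate corollary obtained by chaining three previously established results. The input graph $G$ is semilinear of constant complexity on $n$ vertices. First I would invoke \Cref{lem:compsemilinear} to compute, in time $O(n\polylog n)$, a biclique partition $\mathcal{H}$ of $G$ whose size $s$ satisfies $s = O(n\polylog n)$. Since every biclique partition is in particular a biclique cover, this $\mathcal{H}$ is a valid input for the algorithms in \Cref{lem:apsp} and \Cref{lem:matching}.

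For the all-pairs shortest paths claim, I would directly plug $\mathcal{H}$ into \Cref{lem:apsp}, whose running time is $O(n\cdot s)$. Substituting $s = O(n\polylog n)$ yields $O(n^2\polylog n)$, as required. Note that the $O(n\polylog n)$ preprocessing time spent computing $\mathcal{H}$ is negligible compared to this.

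For the maximum matching claim, I would apply \Cref{lem:matching} to $\mathcal{H}$, which produces a maximum matching in time $O(s^{1+\varepsilon})$ for any fixed $\varepsilon>0$. Writing $s = O(n\log^{c} n)$ for some constant $c$ depending on the complexity of $G$, we get a bound of the form $O\bigl((n\log^c n)^{1+\varepsilon}\bigr)$, and by a standard slack argument one absorbs the polylogarithmic factors $\log^{c(1+\varepsilon)} n$ into an arbitrarily small increase of $\varepsilon$, yielding the stated $O(n^{1+\varepsilon})$ bound.

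The only point requiring a brief remark is that \Cref{lem:matching} as stated is for bipartite graphs, so the matching statement should be understood to apply to bipartite semilinear graphs (or, more generally, to any class of semilinear graphs for which an analogous biclique-cover-driven matching algorithm is available). I do not anticipate a genuine obstacle here: the proof is a verification that the parameters line up, with no new combinatorial or geometric ingredient beyond the three lemmas already in place.
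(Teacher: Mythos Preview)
Your proposal is correct and matches the paper's approach exactly: the paper simply states that the theorem follows by combining \Cref{lem:compsemilinear} with \Cref{lem:apsp} and \Cref{lem:matching}, with no further argument. Your remark about the bipartiteness hypothesis in \Cref{lem:matching} is a fair caveat that the paper itself does not explicitly address.
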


\section{Terrain-like graphs}
\label{sec:terrain}

Recall that capped graphs, also known as terrain-like graphs, are ordered graphs such that for any four vertices $i<j<k<\ell$, if both $ik$ and $j\ell$ are edges, then so is $i\ell$.
We first define the bipartite counterpart of capped graphs, that we call \defn{capped bigraphs}, as ordered bipartite graphs $G=(L\cup R, E)$, where the elements in $L$ appear before the elements in $R$ in the order, and that satisfy the same property, that is for all $i<j\in L$ and $k<\ell\in R$, we have that if $ik\in E$ and $j\ell\in E$, then $i\ell\in E$.

\begin{lemma}
\label{lem:cappedcomparability}
    Every capped bigraph is a two-dimensional comparability bigraph. 
\end{lemma}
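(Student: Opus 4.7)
The plan is to construct an explicit two-dimensional dominance embedding $\varphi\colon L\cup R\to\mathbb{R}^2$ realizing the edges of the capped bigraph. For $i\in L$, let $\mu(i):=\min N(i)$ if $i$ has a neighbor in $R$, and $\mu(i):=n+1$ otherwise. First I would set $\varphi_1(i):=\mu(i)-\tfrac{1}{2}$ and $\varphi_1(r):=r$ for $r\in R$, so that $\varphi_1(\ell)<\varphi_1(r)$ holds precisely when $\mu(\ell)\le r$. This is a necessary condition for $\ell r\in E$, and already rules out all non-edges where $r$ lies strictly before $\ell$'s earliest neighbor.

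For the second coordinate I would put $\varphi_2(i):=i$ for $i\in L$ and, for each $r\in R$,
\[
\varphi_2(r)\;:=\;\max\bigl(\tau(r),\,\max C_r,\,0\bigr)+\tfrac{1}{2},
\]
where $C_r:=\{i\in L:\mu(i)=r\}$ and $\tau(r):=\max\{i\in L:\mu(i)<r \text{ and } ir\in E\}$ (with $\max\emptyset:=-\infty$). Intuitively, $\tau(r)$ marks, among the vertices of $L$ whose smallest neighbor lies strictly before $r$, the largest one still adjacent to $r$; the $\max C_r$ term guarantees that vertices with $\mu(i)=r$ (which are automatically adjacent to $r$) fall below $\varphi_2(r)$.

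The combinatorial heart of the argument is to extract two consequences of the capped property. Writing $B_r:=\{i:\mu(i)<r\}$, the first fact is that $\{i\in B_r:ir\in E\}$ forms a prefix of $B_r$ in the natural order: given $i<j$ with $i\in B_r$ and $jr\in E$, setting $k:=\mu(i)<r$ gives $ik,jr\in E$, so the capped axiom applied to $i<j$, $k<r$ forces $ir\in E$. The second fact is that every $i\in B_r$ with $i<\max C_r$ must also satisfy $ir\in E$, obtained by the same argument with $j:=\max C_r$, which satisfies $jr\in E$ because $\mu(j)=r$. Together these two statements control the behaviour of $\varphi_2$ on $L$ relative to $\varphi_2(r)$, and, combined with the first coordinate, yield $\varphi(\ell)\prec\varphi(r)\iff \ell r\in E$ after a short case analysis on whether $\mu(\ell)<r$, $\mu(\ell)=r$, or $\mu(\ell)>r$.

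The main obstacle I foresee is the potential false positive in the case $\mu(\ell)<r$ and $\tau(r)<\ell\le\max C_r$: if such an $\ell$ existed, my definition would predict the edge $\ell r$ even though $\ell\notin A_r$. The second consequence of the capped property is precisely what prevents this, since $\ell<\max C_r$ would force $\ell r\in E$ and hence $\ell\le\tau(r)$, while $\ell=\max C_r$ would put $\ell\in C_r$, contradicting $\ell\in B_r$. The empty-neighborhood corner case and the ``no shared coordinate'' requirement between $\varphi(L)$ and $\varphi(R)$ are then immediate from the half-integer offsets used.
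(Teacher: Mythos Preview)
Your proposal is correct and, once unwound, is \emph{the same embedding} as the paper's, just with the two coordinates swapped and the second coordinate written in an unnecessarily decomposed form. Observe that if $ir\in E$ then automatically $\mu(i)\le r$, so
\[
\max\bigl(\tau(r),\max C_r\bigr)=\max\{i\in L:\mu(i)\le r,\ ir\in E\}=\max\{i\in L:ir\in E\}=\max N(r),
\]
and hence your $\varphi_2(r)=\max N(r)+\tfrac12$ (or $\tfrac12$ when $r$ is isolated). The paper defines $\varphi(\ell)=(\ell,\mu(\ell)-\tfrac12)$ and $\varphi(r)=(\max N(r)+\tfrac12,\,r)$, which is exactly your map after exchanging the coordinates. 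With this simplification the ``prefix of $B_r$'' and ``$\max C_r$'' facts collapse into a single application of the capped axiom: if $\varphi(\ell)\prec\varphi(r)$ with $\ell\neq\max N(r)$ and $\mu(\ell)\neq r$, then $\ell<\ell^*:=\max N(r)$ and $r^*:=\mu(\ell)<r$ with $\ell r^*,\ell^* r\in E$, so $\ell r\in E$. Your case analysis is valid but is doing more bookkeeping than needed.
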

\begin{proof}
Let $G=(L\cup R,E)$ be a capped bigraph.
We assume for now that $G$ does not have any isolated vertex. 
Let us suppose, without loss of generality, that $L,R\subset\mathbb{N}$.
Consider the map $\varphi: L\cup R\mapsto\mathbb{R}^2$ defined as
\[
\varphi (\ell) = (\ell , \min \{r'\in R : \ell r'\in E\} - 1/2)
\]
for any $\ell\in L$, and
\[
\varphi (r) = (\max \{\ell'\in L : \ell'r\in E\} + 1/2, r),
\]
for any $r\in R$.
We claim that $\ell r\in E$ if and only if $\varphi (\ell)\prec \varphi(r)$.

We first show that if $\ell r\in E$, then $\varphi (\ell)\prec \varphi(r)$.
Indeed, if $\ell r\in E$, it must be the case that $\ell < \max \{\ell' : \ell' r\in E\} + 1/2$.
Similarly, it must be the case that $\min \{r' : \ell r'\in E\} - 1/2 < r$.
Hence the point $\varphi (\ell)$ is dominated by $\varphi(r)$.

Conversely, suppose that $\varphi (\ell)\prec \varphi(r)$.
Let $\ell^* = \max \{\ell' : \ell' r\in E\}$ and $r^* = \min \{r' : \ell r'\in E\}$.
If either $\ell^*=\ell$ or $r^*=r$, then we have $\ell r\in E$ by definition.
Let us therefore suppose that $\ell^*\not= \ell$ and $r^*\not= r$. 
Then, since $\varphi (\ell)\prec\varphi(r)$, we have $\ell < \ell^*$ and $r^* < r$. By definition, $\ell r^*\in E$ and $\ell^* r\in E$,
hence from the X-property of capped bigraphs, it must be the case that $\ell r\in E$ as well.

Finally, note that isolated vertices can easily be added, by assigning them either a very large vertical or horizontal coordinate. 
\end{proof}

\begin{figure}
    \centering
    \subcaptionbox{A capped bigraph obtained from the example of Figure~\ref{fig:terrain}, together with the values of $\varphi(u)$ for each vertex $u$.}{\includegraphics[scale=.6, page=2]{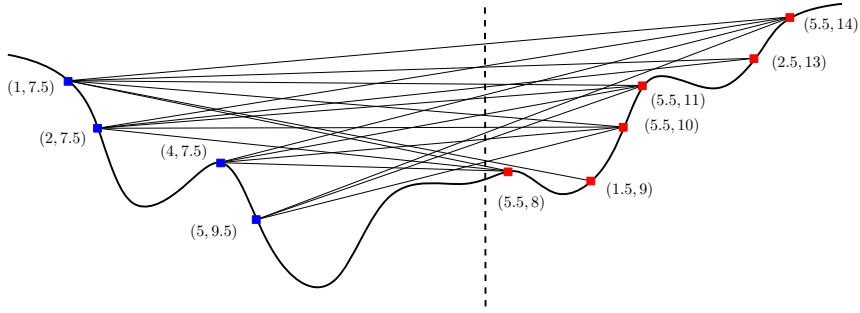}}
    \subcaptionbox{A representation of the capped bigraph as a two-dimensional comparability bigraph.}{\includegraphics[scale=.6, page=3]{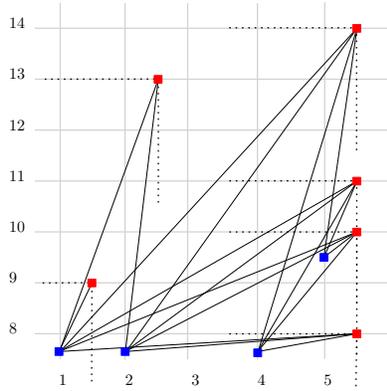}}
    \hspace{1cm}
    \subcaptionbox{A representation of the capped bigraph as a two-directional orthogonal ray graph.}{\includegraphics[scale=.6, page=4]{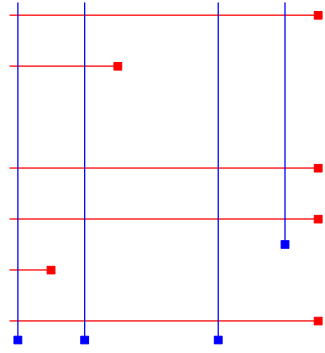}}
    \caption{Illustration of the proof of Lemma~\ref{lem:cappedcomparability}.}
    \label{fig:bipterrain}
\end{figure}

The proof is illustrated in Figure~\ref{fig:bipterrain}.
Note that conversely, any two-dimensional comparability bigraph $(L\cup R, E)$, as witnessed by a map $\varphi$, is a capped bigraph, where the order of the vertices in $L$ and $R$ is given respectively by the $x$ and $y$-coordinates of the points in the image of the map $\varphi$.
It suffices to observe that the $X$-property is satisfied. One can observe that 
this class of bigraphs is in fact the same as that of \defn{interval containment bigraphs} in Huang~\cite{MR2236504} and Hell, Huang, Lin, and McConnell~\cite{MR4149545}, and \defn{two-directional orthogonal ray graphs} studied by Shrestha, Tayu, and Ueno~\cite{MR2671543}.
Therefore, capped bigraphs, two-dimensional comparability bigraphs, interval containment bigraphs, and two-directional orthogonal ray graphs are one and the same class.
The equivalence with two-directional orthogonal ray graphs is immediate: Suppose that the vertical rays extend upwards and the horizontal rays extend to the left, and observe that a vertical ray $v$ intersects a horizontal ray $h$ if and only if the origin of $v$ is dominated by the origin of $h$.

\begin{theorem}
\label{thm:capped}
    Every capped graph on $n$ vertices admits a biclique cover of size $O(n\log^3 n)$. 
\end{theorem}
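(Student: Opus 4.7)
The plan is to proceed by divide-and-conquer on the ordered vertex set, using Lemma~\ref{lem:cappedcomparability} together with \Cref{thm:dddbigraph} to handle each ``cut'' bipartite subgraph.

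Concretely, let $G = ([n], E)$ be a capped graph. Partition the vertex set into the two halves $A = \{1, \ldots, \lceil n/2\rceil\}$ and $B = \{\lceil n/2\rceil + 1, \ldots, n\}$ according to the order, and consider the bipartite subgraph $G[A,B]$ consisting of the edges of $G$ with one endpoint in $A$ and one in $B$. The first step is to observe that $G[A,B]$, viewed as an ordered bipartite graph with $A$ ordered before $B$, is a capped bigraph: for any $i<j$ in $A$ and $k<\ell$ in $B$ we automatically have $i<j<k<\ell$ in $[n]$, so the capped property of $G$ applied to this quadruple gives that if $ik\in E$ and $j\ell\in E$, then $i\ell\in E$, which is precisely the $X$-property for capped bigraphs.

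By Lemma~\ref{lem:cappedcomparability}, $G[A,B]$ is a two-dimensional comparability bigraph, so by \Cref{thm:dddbigraph} with $d=2$ it admits a biclique partition of size $O(|A\cup B|\log^2 |A\cup B|) = O(n\log^2 n)$. This covers all edges of $G$ that straddle the cut. It remains to cover the edges contained in $A$ and in $B$; these induced subgraphs are themselves capped graphs on at most $\lceil n/2\rceil$ vertices, so we recurse on each side. Writing $S(n)$ for the size of the biclique cover produced, the recurrence
\[
S(n) \leq 2\, S(\lceil n/2\rceil) + O(n \log^2 n)
\]
unfolds to $S(n) = O(n \log^3 n)$, which yields the claimed bound.

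There is essentially no hard step beyond this: the main content is Lemma~\ref{lem:cappedcomparability} (already proved), which lets us reduce each bipartite slab to a setting where the general machinery of \Cref{thm:dddbigraph} applies. The only things to double-check are that the induced subgraph of a capped graph on a prefix or suffix of the order is again capped (immediate from the definition) and that the bicliques produced on the cut $(A,B)$ combined with those produced recursively indeed cover every edge of $G$ exactly once, so the construction is in fact a biclique partition, though the theorem only claims a cover.
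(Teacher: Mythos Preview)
Your proof is correct and follows exactly the same approach as the paper's: split the ordered vertex set in half, observe that the cut bipartite subgraph is a capped bigraph, apply Lemma~\ref{lem:cappedcomparability} and \Cref{thm:dddbigraph} to cover it with $O(n\log^2 n)$ size, and recurse on both halves to obtain the $O(n\log^3 n)$ bound via the same recurrence. Your write-up is in fact slightly more explicit than the paper's in justifying why the cut subgraph is a capped bigraph and why the induced subgraphs on each half remain capped.
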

\begin{proof}
    Consider a capped graph $G=([n],E)$ and let $L:=[\lfloor n/2\rfloor ]$ and $R=[n]\setminus L$. 
    Let $F=\{ij\in E : i\in L, j\in R\}$.
    By definition, $H:=(L\cup R, F)$ is a capped bigraph.
    From Lemma~\ref{lem:cappedcomparability} and Theorem~\ref{thm:dddbigraph}, $H$ admits a biclique cover of size $O(n\log^2 n)$.
    It remains to cover the edges of the two subgraphs $G[L]$ and $G[R]$, which are also capped graphs. 
    This can be done recursively, yielding a total size of
    $S(n) \leq 2 S(\lceil n/2\rceil) + O(n\log^2 n) = O(n\log^3 n)$,    as claimed.
\end{proof}

We now consider the computational problem of constructing a biclique cover of size $O(n\log^3 n)$ of a capped graph given as input.
The natural encoding of the input graph can be of size proportional to the number of edges.
Remember that capped graphs are ordered graphs. We can suppose that a capped graph is given in \defn{sorted adjacency list} representation, in which not only the order of the vertices is given, but we are also given the neighbors of each vertex in sorted order.
Note that sorting adjacency lists requires an additional cost of $O(|E|\log\log n)$, since the numbers to sort lie in the range $[n]$~\cite{MR443421}.

\begin{lemma}
\label{lem:comcap}
    Given a capped graph $G=(V,E)$ in sorted adjacency list representation, we can compute a biclique partition of $G$ of size $O(n\log^3 n)$ in time $O(\max\{ |E|, n\log^3 n\})$.
\end{lemma}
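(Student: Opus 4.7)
The plan is to implement the recursive decomposition underlying Theorem~\ref{thm:capped} in an output-sensitive manner. At each call on a current vertex interval $[a,b]$, split at the midpoint $m=\lfloor(a+b)/2\rfloor$ into $L=\{a,\dots,m\}$ and $R=\{m+1,\dots,b\}$, construct a biclique partition of the induced capped bigraph $H=(L\cup R,E\cap(L\times R))$ via Lemma~\ref{lem:cappedcomparability} and an algorithmic version of Theorem~\ref{thm:dddbigraph}, and recurse on $G[L]$ and $G[R]$. The key observation that keeps the bookkeeping cheap is that, because vertices are labelled by $[n]$ and splits happen at the median, the induced subgraphs $G[L]$ and $G[R]$ inherit sorted adjacency lists for free as prefixes and suffixes of the original lists, so no list is rebuilt and $G[L]$, $G[R]$ remain capped graphs given in sorted adjacency list representation.

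The first implementation step at each call is to extract, for each vertex $v$, the pointer $p_v$ into its sorted adjacency list separating neighbors $\le m$ from neighbors $>m$; a single binary search per vertex does this in $O(\log n)$ time. From the same pointer we read off, for each $\ell\in L$, its smallest neighbor in $R$ and, for each $r\in R$, its largest neighbor in $L$, which are exactly the coordinates defining the embedding $\varphi$ in the proof of Lemma~\ref{lem:cappedcomparability}. Thus $\varphi$ is written down in $O(n\log n)$ time per recursion level, and vertices without the corresponding neighbor are simply assigned $\pm\infty$ coordinates as discussed in that proof.

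The second step is the algorithmic unfolding of Theorem~\ref{thm:dddbigraph} applied to the resulting $2$-dimensional comparability bigraph on $\varphi(L\cup R)$. We sort the vertices once by each coordinate, then split at the median of the second coordinate, reduce the cross edges to a $1$-dimensional comparability bigraph, and emit the corresponding biclique partition by a standard median-split divide and conquer (at each step, output a single complete biclique between left-coordinate-small $L$-vertices and right-coordinate-large $R$-vertices, then recurse) of $O(n\log n)$ time and size; finally recurse on both halves. The recurrence $T(n)=2T(n/2)+O(n\log n)$ solves to $O(n\log^2 n)$, bounding both the running time and the size of the partition emitted by this subroutine, and the subroutine is output-sensitive.

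Summing costs, reading the input costs $O(|E|)$. Across all outer recursive calls, the binary searches for splitting and embedding contribute $O(n\log n)$ per level and thus $O(n\log^2 n)$ overall. Each outer call invokes the above subroutine at a cost matching its output, so the subroutine contributes $O(n\log^2 n)$ per level of the outer recursion and $O(n\log^3 n)$ in total, which also bounds the total partition size produced, matching Theorem~\ref{thm:capped}. The total time is therefore $O(|E|+n\log^3 n)=O(\max\{|E|,n\log^3 n\})$. The main obstacle, and the reason the sorted adjacency list hypothesis matters, is to avoid repeatedly scanning each edge at every level of the outer recursion: the contiguity of the splits together with sortedness reduce the per-vertex per-level cost of splitting and embedding from $\Theta(\deg v)$ to $O(\log n)$, which is what prevents an extra $|E|\log n$ term and allows the $|E|$ term to be paid only once during input reading.
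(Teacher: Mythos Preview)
Your proof is correct and follows essentially the same approach as the paper: both implement the recursion of Theorem~\ref{thm:capped} and use the sortedness of adjacency lists to extract, in $O(\log n)$ time per vertex, the minimum right-neighbor and maximum left-neighbor needed for the embedding $\varphi$ of Lemma~\ref{lem:cappedcomparability}. The paper phrases this as ``storing adjacency lists in binary search trees'', while you do a direct binary search on the sorted arrays and maintain prefix/suffix pointers through the recursion; these are equivalent. Your write-up is more explicit than the paper's on two points the paper leaves implicit: how the sorted adjacency lists persist across recursive calls as contiguous subarrays, and how Theorem~\ref{thm:dddbigraph} is made algorithmic via the nested median-split recursion with $O(k\log^2 k)$ cost on a subproblem of size $k$.
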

\begin{proof}
    We follow the proof of Theorem~\ref{thm:capped}. Given a bipartition of the vertices of $G$ into the sets $L$ and $R$, we need to efficiently compute the implicit representation of the corresponding comparability bigraph, as described in the proof of Lemma~\ref{lem:cappedcomparability}. This requires finding, for each vertex $\ell$ in $L$, the smallest index $r\in R$ such that $\ell r\in E$, and conversely for every vertex in $R$.
    This can be achieved by storing adjacency lists in binary search trees, for instance, which can be done in linear time if they are sorted.  
\end{proof}

We can then for instance combine Lemma~\ref{lem:comcap} with Lemma~\ref{lem:apsp}.

\begin{theorem}
    Given a capped graph $G$ on $n$ vertices in sorted adjacency list representation, we can solve the all-pairs shortest path problem on $G$ in time $O(n^2\polylog n)$.
    Furthermore, after a $O(\max\{ |E|, n\log^3 n\})$-time preprocessing, one can construct the breadth-first search tree rooted at any vertex in time $O(n\log^3 n)$.
\end{theorem}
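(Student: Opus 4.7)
The plan is to obtain both statements as a direct composition of Lemma~\ref{lem:comcap} and Lemma~\ref{lem:apsp}. The former gives, in time $O(\max\{|E|, n\log^3 n\})$, an explicit biclique partition $\mathcal{H}$ of $G$ of size $s = O(n\log^3 n)$; the latter says that, once such a $\mathcal{H}$ of size $s$ is available, a BFS tree from any given source can be produced in time $O(s)$, and therefore the (unweighted) all-pairs shortest paths problem can be solved in time $O(n\cdot s)$.

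First, I would run the algorithm of Lemma~\ref{lem:comcap} on the sorted adjacency list input to precompute the biclique partition $\mathcal{H}$; this is the preprocessing step, whose cost is exactly $O(\max\{|E|, n\log^3 n\})$, matching the bound claimed in the theorem. Once $\mathcal{H}$ has been stored, the BFS statement follows immediately by invoking Lemma~\ref{lem:apsp} with $s = O(n\log^3 n)$, giving a per-source running time of $O(n\log^3 n)$, as stated.

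For the all-pairs shortest paths bound, I would iterate the per-source BFS of Lemma~\ref{lem:apsp} over every vertex of $G$. Summing the $O(s)$ cost per source over all $n$ sources yields a total running time of $O(n\cdot s) = O(n \cdot n\log^3 n) = O(n^2 \polylog n)$. The one-time preprocessing cost $O(\max\{|E|, n\log^3 n\})$ needed to obtain $\mathcal{H}$ is absorbed into this bound, since both $|E| = O(n^2)$ and $n\log^3 n = O(n^2\polylog n)$. Thus the combined procedure runs within the claimed time.

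There is no real obstacle here: the content of the theorem lies entirely in the earlier lemmas, and the present statement is a packaging result. The only minor point worth verifying is that the output of Lemma~\ref{lem:comcap} is a data structure on which Lemma~\ref{lem:apsp} can be applied directly (i.e.\ that each biclique is given by its two vertex lists), which is the standard representation produced by the recursive construction used in the proof of Theorem~\ref{thm:capped}.
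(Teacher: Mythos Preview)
Your proposal is correct and matches the paper's approach exactly: the theorem is stated immediately after the sentence ``We can then for instance combine Lemma~\ref{lem:comcap} with Lemma~\ref{lem:apsp},'' and no separate proof is given because the result is precisely this composition. Your check that the preprocessing cost is absorbed by the $O(n^2\polylog n)$ bound is the only detail needed, and you handle it correctly.
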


\section{Intersection graphs}
\label{sec:intersection}

In this section we consider various restricted families of semilinear graphs and establish improved bounds on the size of their biclique covers — bounds that are stronger than those derived from \Cref{thm:semilinear}.

\subsection{Intersection graphs of grounded L-shapes}
 
An \defn{L-shape} is a union of a horizontal and a vertical segment such that the left endpoint of the horizontal segment is the bottom endpoint of the vertical segment. We refer to this point as the \defn{corner of the L-shape}. A set of L-shapes is \defn{grounded} if the corners of all the L-shapes in the set lie on the same negatively-sloped line.
The intersection graphs of grounded L-shapes are also known as \defn{max point-tolerance graphs} studied by Catanzaro, Chaplick, Felsner, Halld\'{o}rsson,
Halld\'{o}rsson, Hixon, and Stacho~\cite{CCFHHHS17}, \defn{hook graphs} as in Hixon~\cite{hixon2013hook} and \defn{non-jumping graphs} in Ahmed, De Luca, Devkota, Efrat, Hossain, Kobourov, Li, Salma, and Welch~\cite{ahmed2017graphs}. 

Interestingly, those graphs can also be characterized as graphs $G=(V,E)$ that have an ordering of the vertices such that for any four vertices $i<j<k<\ell$, if $ik\in E$ and $j\ell\in E$ then also $jk\in E$~\cite{CCFHHHS17,hixon2013hook,ahmed2017graphs} (see Figure~\ref{fig:gls}. The corresponding forbidden ordered pattern is very similar to the one defining the X-property.

\begin{figure}[!h]
\begin{center}
    \includegraphics[scale=.7, page=2]{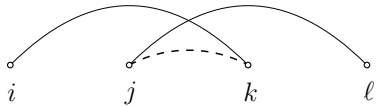}
\end{center}
\caption{\label{fig:gls}The forbidden ordered subgraph in an intersection graph of grounded L-shapes. The dashed curve represents a non-edge: If $ik$ and $j\ell$ are edges, then $jk$ must be an edge.}
\end{figure}

It turns out that we can apply the exact same strategy as we do for capped graphs in the proof of Theorem~\ref{thm:capped}, and the upper bound we obtain is the same.

\begin{theorem}
    Any intersection graph of grounded $L$-shapes on $n$ vertices has a biclique cover of size $O(n \log^3 n)$. 
\end{theorem}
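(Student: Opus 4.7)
The plan is to mirror the proof of \Cref{thm:capped}: define a bipartite analogue of an L-shape intersection graph, show it is a two-dimensional comparability bigraph, apply \Cref{thm:dddbigraph}, and recurse on a median split. So the outline of steps is (i) split, (ii) handle the cross-edges via 2D dominance, (iii) recurse.

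Given an intersection graph $G$ of $n$ grounded L-shapes, order the L-shapes by the $x$-coordinate of their corners. Denote an L-shape $L_v$ by its corner $(x_v, y_v)$ on the grounding anti-diagonal, its horizontal arm extending to $(a_v, y_v)$ with $a_v \ge x_v$, and its vertical arm extending to $(x_v, b_v)$ with $b_v \ge y_v$. Split the vertex set at the median into $L$ (the first $\lfloor n/2\rfloor$ L-shapes) and $R$ (the remainder). Every $\ell\in L$ satisfies $x_\ell < x_r$ for every $r\in R$, and hence $y_\ell > y_r$ by the anti-diagonal constraint.

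The key claim is that the bipartite subgraph $H$ induced by the edges of $G$ between $L$ and $R$ is a two-dimensional comparability bigraph. A direct geometric check shows that, when $x_\ell < x_r$, the L-shapes $L_\ell$ and $L_r$ intersect precisely when the horizontal arm of $L_\ell$ reaches $x_r$ and the vertical arm of $L_r$ reaches $y_\ell$, that is, $a_\ell \geq x_r$ and $b_r \geq y_\ell$. This is a two-dimensional dominance condition: setting $\varphi(\ell) := (-a_\ell, y_\ell)$ for $\ell\in L$ and $\varphi(r) := (-x_r, b_r)$ for $r\in R$, and perturbing infinitesimally so that no two points share a coordinate, we get $\ell r \in E(H)$ if and only if $\varphi(\ell) \prec \varphi(r)$. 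By \Cref{thm:dddbigraph}, $H$ admits a biclique partition of size $O(n \log^2 n)$.

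Since $G[L]$ and $G[R]$ are again intersection graphs of grounded L-shapes on at most $\lceil n/2\rceil$ vertices, recursing yields
\[
S(n) \leq 2\, S(\lceil n/2\rceil) + O(n \log^2 n) = O(n \log^3 n),
\]
which is the desired bound. The one step requiring a bit of care --- and the main (minor) obstacle --- is the bipartite dominance representation of $H$: one must verify both directions of the equivalence and handle the degenerate cases (coordinate ties, L-shapes with no neighbor on the other side) exactly as in the proof of \Cref{lem:cappedcomparability}. Everything else follows the template of \Cref{thm:capped} verbatim.
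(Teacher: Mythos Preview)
Your proposal is correct and follows essentially the same route as the paper: median-split the L-shapes along the grounding line, show the cross bipartite graph is a two-dimensional comparability bigraph, apply \Cref{thm:dddbigraph}, and recurse. The only cosmetic difference is that the paper observes the cross graph is a two-directional orthogonal ray graph (hence a two-dimensional comparability bigraph via the equivalence stated after \Cref{lem:cappedcomparability}), whereas you write out the dominance map $\varphi$ explicitly; the underlying geometric check is identical.
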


\begin{proof}
Let $G=(V,E)$ be an intersection graph of grounded L-shapes on $n$ vertices. The L-shapes can be sorted in increasing order by the $x$ coordinate of their corner. Let $p$ be a point on the grounding line that splits the L-shapes into two subsets $A$ and $B$ of size at most $\lceil n/2\rceil$. Let $L$ be the horizontal line segments defining the L-shapes in $A$ and let $R$ be the vertical line segments defining the L-shapes in $B$. 
See \Cref{fig:groundedL} for an illustration.

\begin{figure}[!h]
\centering
\includegraphics[scale=.6]{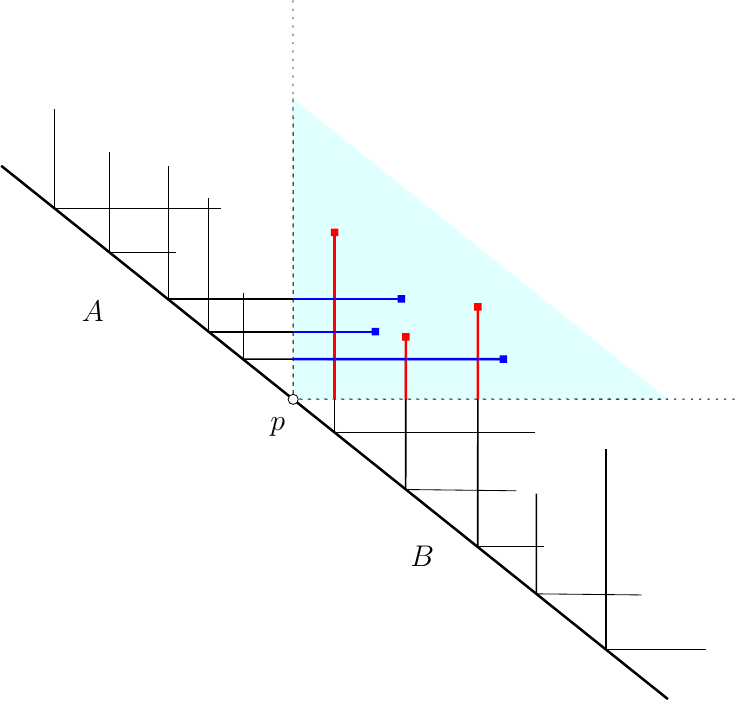}
\caption{An intersection graph of grounded L-shapes.\label{fig:groundedL}}
\end{figure}

The bipartite intersection graphs of these horizontal and vertical segments is a two-directional orthogonal ray graph in which two segments intersect if and only if the corresponding L-shapes intersect. Hence the intersection graph between $A$ and $B$ is a two-dimensional comparability bigraph, and by Theorem~\ref{thm:dddbigraph} has a biclique cover of size $O(n \log^2 n)$. 
It remains to recurse on the intersection graph of grounded L-shapes induced on each of the sets $A$ and $B$. Therefore, letting $S(n)$ be the size of a biclique cover of an intersection graph of grounded L-shapes on $n$ vertices, we get 
\[
S(n)\leq 2 S(\lceil n/2\rceil) + O(n\log^2 n) = O(n\log^3 n),
\]
as claimed. 
\end{proof}

Another way to ground L-shapes is by placing the bottom endpoint of the vertical segment on the $x$-axis.  
We call the $x$-coordinate of the intersection point of an $x$-grounded L-shape and the $x$-axis as its \defn{grounding value}.
Intersection graphs of $x$-grounded L-shapes are discussed in Jel{\'{\i}}nek and T{\"{o}}pfer~\cite{JT19}. 

\begin{theorem}
    An intersection of $x$-grounded L-shapes on $n$ vertices has a biclique cover of size $O(n \log^3 n)$.
\end{theorem}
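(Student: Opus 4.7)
The plan is to follow verbatim the template used for grounded $L$-shapes: sort the $L$-shapes by grounding value, split them into a left half $A$ and a right half $B$ of size at most $\lceil n/2\rceil$ each, show that the bipartite intersection graph between $A$ and $B$ is a two-dimensional comparability bigraph so that Theorem~\ref{thm:dddbigraph} yields a biclique partition of size $O(n\log^2 n)$ of the corresponding cut, and recurse on $A$ and $B$. The recurrence $S(n)\le 2\,S(\lceil n/2\rceil)+O(n\log^2 n)$ solves to $S(n)=O(n\log^3 n)$, so the whole proof reduces to producing the right embedding.

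Parametrize each $x$-grounded $L$-shape $i$ by its grounding value $g_i$, the height $h_i$ of its vertical segment, and the length $w_i$ of its horizontal segment; write $r_i := g_i + w_i$ for the $x$-coordinate of the right endpoint of the horizontal segment. For two $L$-shapes $i\in A$ and $j\in B$ we have $g_i<g_j$. Their vertical segments lie on distinct vertical lines and cannot meet, and the horizontal segment of $j$ is contained in the half-plane $x\ge g_j > g_i$, so it cannot meet the vertical segment of $i$ either. The only intersection left to check is between the horizontal segment of $i$, with $x$-range $[g_i,r_i]$ at height $h_i$, and the vertical segment of $j$, at $x=g_j$ with $y$-range $[0,h_j]$; a direct computation shows they meet if and only if
\[
g_j \le r_i \qquad \text{and} \qquad h_i \le h_j.
\]

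These two inequalities are exactly the 2D comparability condition we need. Define the embedding $\varphi:A\cup B\to\mathbb{R}^2$ by $\varphi(i) = (h_i,\,-r_i)$ for $i\in A$ and $\varphi(j) = (h_j,\,-g_j)$ for $j\in B$, assuming general position so that no two coordinates coincide. For $(i,j)\in A\times B$ we then have $ij\in E$ if and only if $\varphi(i)\prec\varphi(j)$, exhibiting the bipartite intersection graph between $A$ and $B$ as a two-dimensional comparability bigraph. Theorem~\ref{thm:dddbigraph} therefore gives a biclique partition of this cut of size $O(n\log^2 n)$, and summing over the recursion yields the claimed $O(n\log^3 n)$ bound.

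The only step that differs from the grounded $L$-shape proof is the choice of $\varphi$. The main obstacle, though minor, is the case analysis that reduces intersection between two $x$-grounded $L$-shapes from opposite halves to a conjunction of two coordinate-wise inequalities; once this reduction is in hand, the divide-and-conquer argument transcribes word-for-word from the previous case.
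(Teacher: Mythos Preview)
Your proof is correct and follows the same divide-and-conquer template as the paper: split on the median grounding value, handle the cut, recurse. The difference is in how the cut is treated. The paper covers the $A$--$B$ edges with \emph{two} two-dimensional comparability bigraphs---one for the pairs in $A_1\times B$ that intersect to the right of the splitting line $\ell$, and a symmetric one for $A\times B_1$ to the left of $\ell$---whereas you observe directly that the entire cut is already a single two-dimensional comparability bigraph, via the explicit embedding $\varphi$. Under your orientation convention (every horizontal leg extends to the right from the top of the vertical leg), no shape in $B$ can reach to the left of $\ell$, so $B_1=\emptyset$ and the paper's second bigraph is vacuous; the two arguments then coincide, with yours being the streamlined form.

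Two minor remarks. First, you silently skip the case where the two horizontal segments meet; under a general-position assumption on heights this cannot happen, and even in the degenerate case $h_i=h_j$ any overlap of the horizontals forces $g_j\le r_i$ and is already captured by your condition at the corner of $j$. Second, your case analysis relies on all horizontal legs pointing the same way; if the intended class allowed both left- and right-pointing horizontal legs, the single-bigraph claim would fail and one would genuinely need the paper's two-piece decomposition. For the standard single-orientation model you describe, your argument is complete.
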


\begin{proof}
    Let $G=(V,E)$ be an intersection graph of $x$-grounded L-shapes on $n$ vertices. We sort the vertices by their grounding values. Let $\ell$ be a vertical line whose $x$-coordinate $p$ splits the vertices into two subsets of size at most $\lceil n/2\rceil$. Let $A$ be the vertices whose grounding value is to the left of $p$ and let $B$ be the vertices whose grounding value is to the right of $p$. Let $A_1\subseteq A$, $B_1\subseteq B$ be the L-shapes whose horizontal segment intersects $\ell$. 
    See \Cref{fig:xgroundedL} for an illustration.  
    
    \begin{figure}[!h]
    \centering
    \includegraphics[scale=.6]{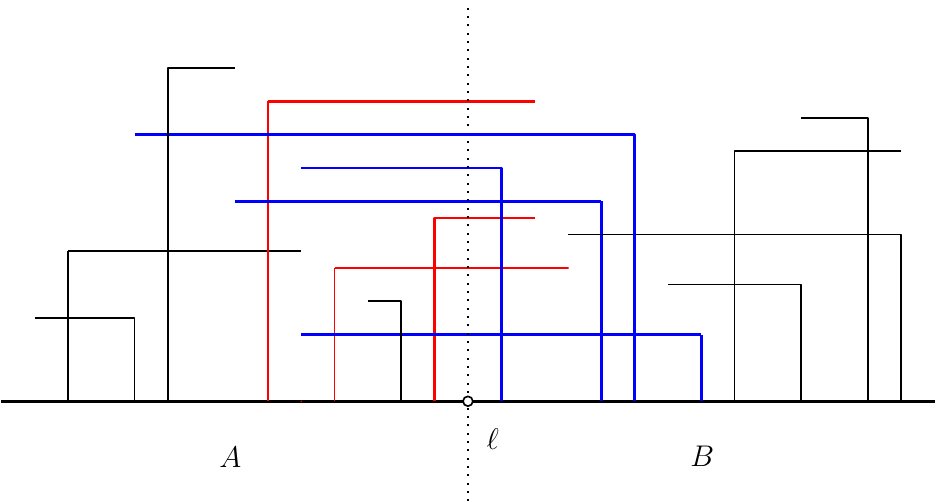}
    \caption{An intersection graph of $x$-grounded L-shapes. The set of red shapes is $A_1$ and the set of blue shapes is $B_1$. \label{fig:xgroundedL}}
    \end{figure}

    The bipartite subgraph of $G$ which contains all the edges with one end in $A$ and the other end in $B$ is the union of the following two bipartite subgraphs of $G$. The subgraph containing the edges with one endpoint in $A_1$ and the other in $B$ and such that the corresponding shapes intersect to the right of $\ell$. Symmetrically, the subgraph containing the edges with one endpoint in $A$ and the other in $B_1$ and such that the corresponding shapes intersect to the left of $\ell$.
    Each one of them is a two-dimensional comparability bigraph. By Theorem~\ref{thm:dddbigraph} each one of them, and therefore their union, has a biclique cover of size $O(n \log^2 n)$. It remains to recurse on the intersection graph of $x$-grounded L-shapes induced on each set $A$ and $B$. Therefore letting $S(n)$ be the size of a biclique cover of an intersection graph of $x$-grounded L-shapes on $n$ vertices, we get 
    \[
    S(n)\leq 2 S(\lceil n/2\rceil) + O(n\log^2 n) = O(n\log^3 n),\]
    as claimed. 
\end{proof}

\subsection{Grid graphs}

A \defn{grid intersection graph} is an intersection graph of horizontal and vertical segments in $\mathbb{R}^2$, where parallel segments do not intersect. We refer to Chaplick, Felsner, Hoffmann, and Wiechert~\cite{chaplick2018grid} for a comprehensive order-theoretic treatment of this class of graphs, and others. They prove the following.

\begin{theorem}[Proposition 6 in \cite{chaplick2018grid}]
    Any grid intersection graph is a $4$-dimensional comparability graph.  
\end{theorem}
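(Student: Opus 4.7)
The plan is to exhibit an explicit embedding $\varphi$ into $\mathbb{R}^{4}$ that certifies the claim. Write every horizontal segment as $h = [x_\ell^h, x_r^h] \times \{y_h\}$ with $x_\ell^h < x_r^h$, and every vertical segment as $v = \{x_v\} \times [y_b^v, y_t^v]$ with $y_b^v < y_t^v$. Since there are only finitely many segments, after an arbitrarily small perturbation we may assume that all coordinates of endpoints are pairwise distinct (this neither creates nor destroys any intersection). I will then set
\[
\varphi(h) = (\, x_\ell^h,\; -x_r^h,\; -y_h,\; y_h\,),
\qquad
\varphi(v) = (\, x_v,\; -x_v,\; -y_b^v,\; y_t^v\,).
\]

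The first step is to verify the bipartite intersection condition. A horizontal segment $h$ meets a vertical segment $v$ precisely when $x_\ell^h < x_v < x_r^h$ and $y_b^v < y_h < y_t^v$ (using our general-position assumption). Unpacking $\varphi(h) \prec \varphi(v)$ coordinate by coordinate gives exactly $x_\ell^h < x_v$, $-x_r^h < -x_v$, $-y_h < -y_b^v$, and $y_h < y_t^v$, which is equivalent to the four intersection inequalities. Thus an edge between a horizontal and a vertical segment is present in $G$ if and only if their images are comparable in the dominance order on $\mathbb{R}^4$ (in one direction only, namely $\varphi(h) \prec \varphi(v)$).

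The second step is to confirm that there are no spurious comparabilities among segments of the same type. For two horizontal segments $h_1, h_2$, the third and fourth coordinates of $\varphi$ are $-y_{h_i}$ and $y_{h_i}$, so $\varphi(h_1) \prec \varphi(h_2)$ would force simultaneously $y_{h_2} < y_{h_1}$ and $y_{h_1} < y_{h_2}$, an impossibility. Symmetrically, for two vertical segments $v_1, v_2$, the first two coordinates $x_{v_i}$ and $-x_{v_i}$ produce the same contradiction. Hence no two same-type segments are comparable, which matches the fact that parallel segments are required to be disjoint in a grid intersection graph.

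Putting the two steps together, $uv$ is an edge of $G$ if and only if $\varphi(u)$ and $\varphi(v)$ are comparable in the product order on $\mathbb{R}^4$, which is precisely the definition of a $4$-dimensional comparability graph. No step is really an obstacle here — the only mild subtlety is strict versus non-strict inequalities at the segment endpoints, and this is disposed of by the generic-position perturbation at the outset.
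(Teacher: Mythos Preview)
Your argument is correct. The paper itself does not give a proof of this statement; it simply quotes it as Proposition~6 of Chaplick, Felsner, Hoffmann, and Wiechert and then invokes it as a black box to feed into Theorem~\ref{thm:dddgraph}. Your explicit four-coordinate embedding is exactly the sort of direct construction that establishes the claim, and the trick of using the paired coordinates $(-y_h,\,y_h)$ for horizontals and $(x_v,\,-x_v)$ for verticals to kill same-type comparabilities is clean.

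One small point you assert but do not verify: comparability allows either $\varphi(h)\prec\varphi(v)$ or $\varphi(v)\prec\varphi(h)$, and you only analyse the former. The latter is indeed impossible, since it would force $x_v<x_\ell^h$ (first coordinate) and $x_r^h<x_v$ (second coordinate), contradicting $x_\ell^h<x_r^h$. Adding that one line makes the proof airtight.
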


Theorem~\ref{thm:dddgraph} then directly implies a near-linear bound on their biclique covers.

\begin{corollary}
    Any grid intersection graph on $n$ vertices has a biclique cover of size $O(n \log^4 n)$.
\end{corollary}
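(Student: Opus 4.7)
The plan is to simply chain together the two results invoked immediately above. First I would apply the theorem quoted from Chaplick, Felsner, Hoffmann, and Wiechert, which asserts that any grid intersection graph $G$ on $n$ vertices can be realised as a $4$-dimensional comparability graph. Concretely, this produces an embedding map $\varphi : V(G) \to \mathbb{R}^4$ such that two vertices $u, v \in V(G)$ are adjacent in $G$ if and only if $\varphi(u) \prec \varphi(v)$ or $\varphi(v) \prec \varphi(u)$ under the coordinatewise order on $\mathbb{R}^4$.

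Having the embedding in hand, I would then feed $G$ directly into \Cref{thm:dddgraph} with $d = 4$: any $d$-dimensional comparability graph on $n$ vertices admits a biclique partition of size $O(n \log^d n)$. Specialising to $d = 4$ yields a biclique partition of size $O(n \log^4 n)$, and since every biclique partition is in particular a biclique cover, the claimed bound follows immediately.

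Honestly there is no real obstacle here, the statement is a one-line corollary. The only technical point worth noting is the standing assumption in Section~\ref{sec:comp} that no two points in $\varphi(V)$ share a coordinate; this is a harmless genericity condition that can be arranged by an arbitrarily small symbolic perturbation of the embedding produced by the Chaplick--Felsner--Hoffmann--Wiechert construction, and such a perturbation neither creates nor destroys edges of $G$ nor affects the asymptotic size of the resulting biclique cover.
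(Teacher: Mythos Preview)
Your proposal is correct and matches the paper's approach exactly: the paper simply states that \Cref{thm:dddgraph} directly implies the corollary once grid intersection graphs are known to be $4$-dimensional comparability graphs. Your remark about perturbing to ensure distinct coordinates is a harmless extra detail that the paper does not bother to mention.
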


\subsection{Interval graphs}

For the remaining classes of intersection graphs, we can get better upper bounds by using interval graphs instead of comparability graphs as base cases for our induction. The proof ideas of this section are either sketched in the literature or inspired by existing work. We include them here for the sake of completeness. They rely on a data structure that is essentially a segment tree equipped with a small amount of additional information.

Let $\mathcal{I}$ be a collection of intervals in $\mathbb{R}$. Let $p_1,p_2,\ldots,p_m$ be the set of the endpoints of the intervals in $\mathcal{I}$ sorted in increasing order. We refer to any of the intervals $(-\infty,p_1]$, $[p_1,p_2]$, $\ldots$ $[p_{m-1},p_m]$, $[p_m,-\infty)$ as an \defn{elementary interval}. 
Let $\mathcal{E}$ be the set of elementary intervals defined with respect to $\mathcal{I}$. 
We define a data structure with respect to $\mathcal{I}$ which is a segment tree (as defined in \cite{CGbook}) with some additional information stored in each node of the tree. An \defn{augmented segment tree} is a balanced binary tree $T$ with the following properties. 
\begin{itemize}
\item The leaves of $T$ correspond to $\mathcal{E}$ in an ordered way (the first leaf corresponds to $(-\infty,p_1]$, the second to $[p_1,p_2]$, etc.). The slabs corresponding to the leaves are their elementary intervals.
\item Each internal node $v$ of $T$ corresponds to a slab $s(v)$ which is the union of the slabs in the leaves in the subtree rooted in $v$, or equivalently the union of the the two slabs corresponding to the children of $v$. 
\item Each node $v$ of the tree stores two lists of intervals from $\mathcal{I}$: A list $L_v$ of intervals $I\in \mathcal{I}$ such that $s(v)$ is contained in $I$ but $s(p(v))$ is not contained in $I$, where $p(v)$ is the parent of $v$ in $T$; A list $S_v$ of intervals $I\in \mathcal{I}$ such that an end of the interval $I$ is contained in $s(v)$. The list $L_v$ is referred to as the \defn{long list} and the list $S_v$ as the \defn{short list} of $v$. 
\end{itemize}

Given an augmented segment tree $T$ with respect to a set of intervals $\mathcal{I}$, we bound the number of times that a segment $I\in \mathcal{I}$ is contained in a list $L_v$ or $S_v$ for some node $v$. 

\begin{lemma}\label{lem:segTree}
    Let $T$ be the augmented segment tree for a set of $n$ intervals $\mathcal{I}$.
    Then any interval $I\in \mathcal{I}$ is stored in at most $O(\log n)$ short or long lists of some node in $T$.
\end{lemma}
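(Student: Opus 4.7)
The plan is to bound the number of long-list appearances and short-list appearances of a fixed interval $I \in \mathcal{I}$ separately, each by $O(\log n)$.

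For the short lists, I would argue directly from the definition. An endpoint $p$ of $I$ lies in $s(v)$ only when $v$ is on the (unique) root-to-leaf path of the leaf whose elementary interval contains $p$. Since $T$ is a balanced binary tree on $O(n)$ leaves, this path has length $O(\log n)$. Because $I$ has only two endpoints, the total number of nodes $v$ with $I \in S_v$ is at most $2 \cdot O(\log n) = O(\log n)$.

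For the long lists, I would use the standard canonical-cover argument for segment trees. The set of nodes $v$ with $I \in L_v$ consists of those $v$ such that $s(v) \subseteq I$ but $s(p(v)) \not\subseteq I$; equivalently, these are the roots of the maximal subtrees whose union of leaf-slabs is contained in $I$. The key observation I would make is that on each fixed depth level of $T$, at most four such nodes can occur: if three nodes $v_1, v_2, v_3$ on the same level sat in $L_v$ consecutively from left to right, then the parent of the middle one would have both its children contained in $I$, forcing $s(p(v_2)) \subseteq I$, a contradiction. So at most two such nodes appear near the left endpoint of $I$ at each level and at most two near the right endpoint. Summing over the $O(\log n)$ levels gives $O(\log n)$ long-list appearances.

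Adding the two bounds, $I$ is stored in at most $O(\log n)$ long or short lists, which is the claim. The main point — really the only subtle one — is the canonical-cover bound for long lists; the short-list bound is immediate from the depth of the tree. This is routine material from the segment tree literature, so I expect no real obstacle beyond writing the per-level argument cleanly.
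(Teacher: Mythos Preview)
Your proof is correct and follows the same two-part structure as the paper's: bound the short-list and long-list appearances of $I$ separately by $O(\log n)$. The short-list arguments are essentially identical. For the long lists, the paper takes a slightly different (and shorter) route than your per-level canonical-cover count: it simply observes that if $I\in L_v$ then $s(p(v))\not\subseteq I$ while $s(v)\subseteq I$, so $s(p(v))$ must contain an endpoint of $I$, i.e.\ $I\in S_{p(v)}$; this reduces the long-list count directly to the short-list bound already established (up to a factor of two, since each parent has two children). Both arguments are standard and equally valid---yours is the textbook segment-tree argument, the paper's just piggybacks on the short-list bound instead of repeating a per-level analysis.
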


\begin{proof}
    Note that by construction, in any level of the tree, the slabs corresponding to the nodes of this level are disjoint. Hence $I\in \mathcal{I}$ is contained in at most two short lists in each level and in at most $O(\log n)$ short lists in the tree $T$. If $I\in \mathcal{I}$ is contained in a long list $L_v$ of some node $v$ in the tree $T$, then the slab of the parent of $v$, $s(p(v))$, contains an end of $I$ and therefore $I$ is in the short list $S_{p(v)}$. Therefore $I$ appears in at most $O(\log n)$ long lists in $T$. 
\end{proof}

We use the lemma to show an upper bound on the size of a biclique cover of interval intersection graph.

\begin{theorem}\label{thm:intervals}
    Let $G$ be the intersection graph of a set of $n$ intervals $\mathcal{I}$ in $\mathbb{R}$. Then $G$ has a biclique cover of size $O(n \log n)$. 
    Moreover, each interval $I\in \mathcal{I}$ appears in at most $O(\log n)$ bicliques in the cover.
\end{theorem}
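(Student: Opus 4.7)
The plan is to construct the augmented segment tree $T$ defined above on $\mathcal{I}$, and to use, for each node $v$ of $T$ with both $L_v$ and $S_v$ non-empty, the bipartite graph $B_v$ with parts $L_v$ and $S_v$ as one of the bicliques in the cover. The intuition is that intervals in $L_v$ contain the slab $s(v)$ in its entirety, while intervals in $S_v$ have at least one endpoint inside $s(v)$; therefore any pair consisting of one interval of each type must intersect inside $s(v)$. So $B_v$ is a biclique of $G$, and the remaining task is to verify that $\{B_v\}_v$ covers every edge of $G$.

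For the cover property, let $IJ\in E(G)$ be an arbitrary edge. After a general-position perturbation of the endpoints (so that no two coincide and no endpoint of one interval lies on the boundary of another), two intersecting intervals always satisfy that at least one of them has an endpoint strictly inside the other. Without loss of generality, let $p$ be an endpoint of $I$ lying in the interior of $J$; then the elementary interval $e \ni p$ is contained in $J$. Walking upward from the leaf $e$ toward the root, there is a highest node $v$ on this path with $s(v)\subseteq J$ (such $v$ exists because $s(e)=e\subseteq J$, while the root's slab is not contained in the bounded interval $J$). By the very definition of $L_v$, we then have $J\in L_v$; and since $p\in s(v)$ is an endpoint of $I$, we have $I\in S_v$. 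Hence $IJ$ is an edge of $B_v$, as required.

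For the size, \Cref{lem:segTree} states that each interval of $\mathcal{I}$ belongs to at most $O(\log n)$ of the lists $L_v$ and $S_v$ across all nodes $v$ of $T$. Summing over intervals yields a total size $\sum_v (|L_v|+|S_v|) = O(n\log n)$ for the cover. The ``moreover'' statement follows from the same observation, since $I$ participates in the biclique $B_v$ only if $I\in L_v\cup S_v$, and this occurs for at most $O(\log n)$ nodes $v$.

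The main subtlety is the cover verification, and in particular the handling of degenerate configurations such as intervals sharing endpoints, or touching only at a single boundary point. The cleanest way around this is the general-position assumption combined with a half-open convention on the elementary intervals of $T$; once these are in place, the transition-node argument above produces a covering biclique for every edge, and the counting step is an immediate consequence of \Cref{lem:segTree}.
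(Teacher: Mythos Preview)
Your proposal is correct and follows essentially the same approach as the paper: build the augmented segment tree, take the bicliques $(L_v,S_v)$ at each node, verify coverage via the ``one interval contains an endpoint of the other'' observation, and bound the size by \Cref{lem:segTree}. Your write-up is in fact slightly more careful than the paper's, since you spell out why each $B_v$ is a biclique and how to locate the covering node by walking up from the leaf; the general-position perturbation you invoke is harmless here, as a sufficiently small perturbation of the endpoints does not change the intersection graph.
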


\begin{proof}
    Let $T$ be an augmented segment tree of $\mathcal{I}$. Let $v$ be a node of tree $T$. The subgraph of $G$ that contains all the edges with one end in $S_v$ and the other in $L_v$ is a biclique. We consider the collection of all bicliques with the bipartition $(S_v,L_v)$ for a node $v$ of $T$. 
    
    First, the above collection of bicliques is a biclique cover. If two intervals $I_1,I_2 \in \mathcal{I}$ intersect then it is also the case that one of those intervals contains an endpoint of the other. Without loss of generality, let us assume that $I_1$ contains an endpoint of $I_2$. Therefore there is a node $v$ of $T$ where $I_1$ is in $L_v$ and $I_2$ is in $S_v$ and therefore the edge between $I_1$ and $I_2   $ is covered by the biclique added for $v$. 
    
    Second, the size of the biclique cover is at most $O(n \log n)$. By~\Cref{lem:segTree}, each interval appears at most $O(\log n)$ times in either long or short list of some node in $T$, hence summing the appearances over all the intervals, we get the required bound. 
\end{proof}

\subsection{Bounded-boxicity graphs}

The proof of the following is sketched in Chan~\cite{cha06} and Bhore et al.~\cite{BCHST25}, so we do not claim any novelty here. 
We need the following observation.

\begin{observation}\label{obs:bip-subgraph}
    Let $G$ be a graph with a biclique cover of size $s$. Then any bipartite graph obtained from $G$ by coloring the vertices in two colors and removing all monochromatic edges also has a biclique cover of size $s$.
\end{observation}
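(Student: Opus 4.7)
The plan is to build a biclique cover of the bipartite graph $G'$ directly from a given biclique cover $\mathcal{H}$ of $G$, by splitting each biclique according to the 2-coloring. Fix a 2-coloring $c: V(G)\to\{R,B\}$ and let $G'$ be the bipartite graph obtained from $G$ by deleting all monochromatic edges.

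For each biclique $H\in\mathcal{H}$ with bipartition $(A,B)$, partition each side by color: write $A=A_R\sqcup A_B$ and $B=B_R\sqcup B_B$, where the subscript records the color class. Replace $H$ in the cover by the two bicliques $(A_R,B_B)$ and $(A_B,B_R)$ (discarding any whose sides are empty). Call $\mathcal{H}'$ the resulting collection. The first step is then to verify coverage: any edge $uv$ of $G'$ is a bichromatic edge of $G$, hence is covered by some $H\in\mathcal{H}$. Assuming $u\in A$ and $v\in B$ and, say, $c(u)=R$, $c(v)=B$, we get $u\in A_R$ and $v\in B_B$, so $uv$ is covered by $(A_R,B_B)\in\mathcal{H}'$ (and symmetrically for the other color pattern).

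The second step is the size bound. Since $A_R,A_B$ partition $A$ and $B_R,B_B$ partition $B$, the two replacement bicliques contribute
\[
|A_R|+|B_B|+|A_B|+|B_R|=|A|+|B|=|V(H)|
\]
to the total size, so $\sum_{H'\in\mathcal{H}'}|V(H')|\le\sum_{H\in\mathcal{H}}|V(H)|=s$, and $\mathcal{H}'$ is a biclique cover of $G'$ of size at most $s$, as required.

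There is no real obstacle here: the construction is routine and the key point is simply that splitting a biclique by the bipartition induced from the 2-coloring keeps the total number of vertex-incidences unchanged, because each vertex of $H$ lands in exactly one of the two replacement bicliques. The only minor care needed is to drop replacement bicliques with an empty side (which can only decrease the size) and to note that monochromatic edges of $H$ that are no longer edges of $G'$ simply go uncovered, which is fine because they do not need to be covered.
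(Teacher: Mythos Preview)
Your proof is correct and follows exactly the same idea as the paper: split each biclique of the cover into its two bichromatic sub-bicliques, which preserves the total size. The paper states this in a single sentence, while you have spelled out the details carefully.
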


Indeed, every biclique of a biclique cover of $G$ can be split into at most two bichromatic bicliques of the same total size.

\begin{theorem}\label{thm:boxes}
    Let $G$ be a graph of boxicity $d$ on $n$ vertices. 
    Then $G$ has a biclique cover of size $O(n \log^d n)$. Moreover, any of the vertices of $G$ is contained in at most $O(\log^d n)$ bicliques in the cover.
\end{theorem}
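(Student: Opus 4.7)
The plan is to prove this by induction on the boxicity $d$, using the augmented segment tree approach from Theorem~\ref{thm:intervals} as the base case, and exploiting the fact that boxicity is defined by intersection in each coordinate independently. The base case $d=1$ is exactly Theorem~\ref{thm:intervals}, which gives a biclique cover of size $O(n\log n)$ where every interval appears in $O(\log n)$ bicliques.

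For the inductive step, fix a representation of $G$ as the intersection graph of $n$ axis-aligned boxes in $\mathbb{R}^d$, and let $\mathcal{I}$ be the collection of their projections onto the $d$\ordinal coordinate. Build the augmented segment tree $T$ on $\mathcal{I}$. At each node $v\in T$, the long list $L_v$ and short list $S_v$ have the crucial property that any box in $L_v$ and any box in $S_v$ already intersect in the $d$\ordinal coordinate (the first contains the slab $s(v)$, the second has an endpoint in $s(v)$). Therefore, for $u\in L_v$ and $w\in S_v$, the pair $uw$ is an edge of $G$ if and only if their projections onto the first $d-1$ coordinates intersect. Define $H_v$ as the intersection graph of these projected boxes on the vertex set $L_v\cup S_v$: this is a graph of boxicity at most $d-1$, and the edges of $G$ between $L_v$ and $S_v$ are exactly the bichromatic edges of $H_v$ with respect to the bipartition $(L_v,S_v)$.

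Apply the inductive hypothesis to $H_v$ to obtain a biclique cover of $H_v$ of size $O((|L_v|+|S_v|)\log^{d-1}(|L_v|+|S_v|))$, in which each vertex appears in at most $O(\log^{d-1} n)$ bicliques. By Observation~\ref{obs:bip-subgraph}, the same size bound carries over to a biclique cover of the bichromatic-only graph. Take the union over all nodes $v\in T$. This covers every edge of $G$: if $uw\in E(G)$, then their $d$\ordinal intervals intersect, so by the same argument as in the proof of Theorem~\ref{thm:intervals} there exists a node $v$ with one of the two intervals in $L_v$ and the other in $S_v$, and the pair $(u,w)$ is covered at $v$.

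The total size is bounded by
\[
\sum_{v\in T} O\bigl((|L_v|+|S_v|)\log^{d-1} n\bigr) = O(\log^{d-1} n)\cdot \sum_{v\in T}(|L_v|+|S_v|) = O(n\log^d n),
\]
where the last equality uses Lemma~\ref{lem:segTree}. Similarly, any fixed vertex belongs to $O(\log n)$ pairs of lists across $T$, and within each such node it is assigned to $O(\log^{d-1} n)$ bicliques by the inductive hypothesis, yielding $O(\log^d n)$ bicliques in total. The only delicate point is ensuring that the boxicity-$(d-1)$ hypothesis can be transferred to the bipartite restriction without loss; this is exactly what Observation~\ref{obs:bip-subgraph} provides, and it is the one ingredient that makes the induction close cleanly.
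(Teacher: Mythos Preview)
Your proposal is correct and follows essentially the same approach as the paper: induction on $d$ with Theorem~\ref{thm:intervals} as the base case, an augmented segment tree on the $d$\ordinal-coordinate projections, the inductive hypothesis applied to the $(d-1)$-dimensional projected boxes at each node $v$, Observation~\ref{obs:bip-subgraph} to restrict to the $(L_v,S_v)$ bipartition, and Lemma~\ref{lem:segTree} to sum up. If anything, your accounting of the size as $O\bigl((|L_v|+|S_v|)\log^{d-1}(|L_v|+|S_v|)\bigr)$ at each node is slightly more careful than the paper's phrasing.
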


\begin{proof}
    Let $G$ be the intersection graph of a set $\mathcal{B}$ of $n$ boxes in $\mathbb{R}^d$. 
    We prove that $G$ has a biclique cover of size $O(n \log^d n)$ by induction on $d$. The case $d=1$ is proved in \Cref{thm:intervals}. 
    
    Let $\mathcal{I}_d$ be the collection of intervals which are the projection of the boxes in $\mathcal{B}$ on the $d$-th axis. Let $T_d$ be an augmented segment tree defined with respect to $\mathcal{I}_d$. Let $v$ be a node of $T_d$ and let $S_v$ and $L_v$ the short and long lists of $v$. We project the boxes corresponding to the intervals in $S_v$ and $L_v$ on the first $d-1$ coordinates. Let $G'$ be the corresponding intersection graph of boxes in $\mathbb{R}^{d-1}$. By the induction hypothesis it has a biclique cover of size $O(n \log^{d-1} n)$. By~\Cref{obs:bip-subgraph}, the bipartite subgraph which contains only the edges between boxes corresponding to $S_v$ and the boxes corresponding to $L_v$ also has a biclique cover of size $O(n \log^{d-1} n)$. The union of all such biclique covers taken for each node $v$ of $T_d$ is the required biclique cover. Every edge appears in one of the bicliques. Moreover, by the induction hypothesis and~\Cref{lem:segTree}, each box in $\mathcal{B}$ appears in at most $O(\log^d n)$ bicliques and therefore the size of the biclique cover is $O(n\log^d n)$, as required.
\end{proof}

Note that combining with~\Cref{obs:K_ttfreeLB}, this improves on results by Tomon and Zakharov~\cite{MR4328360} and Basit et al.~\cite{BCSTT21} by a factor $\log^d n$. A sharper bound is established by Chan and Har-Peled in~\cite{CHP25}. 

\begin{corollary}
    If $G$ is the intersection graph of $n$ $d$-dimensional axis-aligned boxes such that $G$ contains no $K_{t,t}$ for some $t\in\mathbb{N}$, then G has at most $O(n\log^d n)$ edges.
\end{corollary}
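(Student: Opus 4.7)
The plan is to combine the two ingredients that are already on hand. By \Cref{thm:boxes}, the intersection graph $G$ of the $n$ axis-aligned boxes admits a biclique cover of size $s = O(n \log^d n)$. Now apply \Cref{obs:K_ttfreeLB}: since $G$ contains no $K_{t,t}$, the number of edges in any biclique $H$ appearing in the cover satisfies $|E(H)| \le t \cdot |V(H)|$, so summing over all bicliques gives $|E(G)| \le t \cdot s = O(n\log^d n)$, where the constant $t$ is absorbed into the $O$-notation.

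There is no real obstacle here; the statement is a one-line corollary of the two preceding results, and the only thing to check is that the absorption of the constant $t$ into the $O$-notation is legitimate, which it is because $t$ is fixed. The whole argument therefore reduces to invoking \Cref{thm:boxes} followed by \Cref{obs:K_ttfreeLB}.
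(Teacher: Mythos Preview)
Your proposal is correct and matches the paper's approach exactly: the corollary is stated immediately after \Cref{thm:boxes} with the remark that it follows by combining with \Cref{obs:K_ttfreeLB}, which is precisely what you do.
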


\subsection{Intersection graphs of two internally disjoint sets of segments}

Let $\mathcal{R}$ and $\mathcal{B}$ be two sets of red and blue segments in the plane such that no two segments of the same color intersect. We consider the bipartite intersection graph between the red and the blue segments. The following proof uses a construction from Chazelle, Edelsbrunner, Guibas, and Sharir~\cite{CEGS94}.

\begin{theorem}
    Let $G$ be the bipartite intersection graph between a set $\mathcal{R}$ of at most $n$ pairwise disjoint red segments and a set $\mathcal{B}$ of at most $n$ pairwise disjoint blue segments. Then $G$ has a biclique cover of size $O(n\log^3 n)$.
\end{theorem}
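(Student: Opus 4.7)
The plan is to adapt the hereditary segment tree construction of Chazelle, Edelsbrunner, Guibas, and Sharir~\cite{CEGS94}, in the same spirit as Theorem~\ref{thm:intervals}, and to combine it with Theorem~\ref{thm:dddbigraph} applied inside each slab. First I would build an augmented segment tree $T$ on the $x$-coordinates of the endpoints of all segments in $\mathcal{R}\cup\mathcal{B}$. For every node $v$ with slab $s(v)$, I define four canonical lists: the \emph{long} red and blue lists $L_v^R, L_v^B$ of segments whose $x$-extent contains $s(v)$ but not the slab of $v$'s parent, and the \emph{short} red and blue lists $S_v^R, S_v^B$ of segments having at least one endpoint inside $s(v)$. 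By Lemma~\ref{lem:segTree}, every segment appears in at most $O(\log n)$ of these lists across $T$.

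Using the standard hereditary charging scheme, every red–blue crossing is assigned to a unique node $v$ at which at least one of the two segments is long. It therefore suffices to construct, at each such $v$, a biclique cover of size $O\bigl((|L_v^R| + |L_v^B| + |S_v^R| + |S_v^B|)\log^2 n\bigr)$ of the pairs charged to $v$. Summing over all nodes and applying Lemma~\ref{lem:segTree} then yields the target total of $O(n\log^3 n)$.

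Fix a node $v$. Consider first the long–long pairs: since the red segments are pairwise disjoint and every element of $L_v^R$ fully crosses $s(v)$, they inherit a well-defined vertical order inside $s(v)$, and similarly for $L_v^B$. A long red $r$ and a long blue $b$ cross inside $s(v)$ if and only if their vertical orders on the two vertical boundaries of $s(v)$ differ. Parameterizing every long segment by its vertical coordinates at the left and right boundaries of $s(v)$, the crossing condition splits, according to which color is uppermost on the left, into two two-dimensional dominance conditions. The long–long crossing graph is thus the union of two two-dimensional comparability bigraphs, and Theorem~\ref{thm:dddbigraph} yields a biclique cover of size $O((|L_v^R|+|L_v^B|)\log^2 n)$. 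The short–long case is analogous: for $r\in S_v^R$ with an endpoint $p_r\in s(v)$ and $b\in L_v^B$, whether $r$ crosses $b$ inside $s(v)$ can again be written as a two-dimensional dominance condition between the coordinates of $p_r$ and the position of $b$ at the $x$-coordinate of $p_r$, giving another two-dimensional comparability bigraph covered by Theorem~\ref{thm:dddbigraph}. Short–short pairs are deferred to proper descendants of $v$ by the hereditary scheme.

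The main obstacle I anticipate is the precise bookkeeping of the charging: one must verify that every crossing is covered by the bicliques produced at some node and that the dominance conditions at each node correctly capture the crossings inside $s(v)$, without double counting or omissions. This is the technical heart of~\cite{CEGS94} and I would adopt their conventions directly; the remaining arithmetic is a straightforward summation using Lemma~\ref{lem:segTree}.
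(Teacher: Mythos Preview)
Your overall architecture—augmented segment tree, long–long and short–long handled at each node, short–short deferred to descendants—is the right one and follows~\cite{CEGS94}. Your treatment of the long–long case is correct: both families span $s(v)$, and ``orders on the two boundaries differ'' is a union of two two-dimensional dominance relations.

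The gap is in your short–long step. You claim that the crossing of a short red $r$ (with endpoint $p_r\in s(v)$) and a long blue $b$ is ``a two-dimensional dominance condition between the coordinates of $p_r$ and the position of $b$ at the $x$-coordinate of $p_r$.'' But the quantity $y_b(x_{p_r})=\alpha_b\, x_{p_r}+\beta_b$ mixes the slope of $b$ with the $x$-coordinate of $p_r$; it is not a coordinate of $b$ alone, so the crossing condition is \emph{not} of the form $g(r)\prec h(b)$ and Theorem~\ref{thm:dddbigraph} does not apply. Without further argument the short–long relation is just a point versus line sidedness relation, which is not semilinear at all.

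The paper fixes precisely this step, and it is the only substantive difference from your sketch. At node $v$ it takes the short red segments touching, say, the left boundary of $s(v)$ and \emph{extends each of them to the right boundary of $s(v)$ without creating any new intersection with the long blues}: the long blues, being pairwise disjoint and spanning $s(v)$, cut the slab into strips, the right endpoint of each short red lies in one such strip, and one can continue the segment inside that strip to the right wall. After this extension both families span $s(v)$, so the short–long graph coincides with a long–long graph and your own ``orders on the two boundaries differ'' argument now applies, giving a bipartite subgraph of a permutation graph and hence an $O(p\log^2 p)$ cover via Theorem~\ref{thm:dddgraph} and Observation~\ref{obs:bip-subgraph}. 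Equivalently, you could argue directly that because the long blues are linearly ordered throughout $s(v)$, the set of long blues crossed by any fixed short red is an \emph{interval} in that order, and ``index lies in an interval'' is a genuine two-dimensional dominance relation; but that is the argument you must make, not the $y_b(x_{p_r})$ sentence.
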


\begin{proof}
    We project the segments in $\mathcal{R}\cup \mathcal{B}$ on the $x$-axis. Let $\mathcal{I}$ the set of the resulting (red and blue) intervals. Let $T$ be an augmented segment tree with respect to $\mathcal{I}$. In each node $v$ of $T$ we split the short and the long lists of intervals based on the color of the segment from which the interval originated. We store a short $S_v^r$ and long list $L_v^r$ for the red intervals and a short $S_v^b$ and a long list $L_v^b$ for the blue intervals. If there are intervals in $S^r_v$ (or similarly in $S^b_v$) which do not intersect any of the two boundaries of the slab corresponding to $v$, we extend them so they hit the boundary without introducing any new intersections between the intervals. 

    Let $v$ be a node of $T$, we partition the list $S_v^r$ further into two lists, $S^r_1$ and $S^r_2$ where $S^r_1$ contains all the short red intervals in $S_v^r$ which intersect the left boundary of the slab corresponding to $v$, and similarly $S^r_2$ contains all the short red intervals in $S_v^r$ which intersect the right boundary of the slab corresponding to $v$. See~\Cref{fig:redblueSegments} for an illustration of $L_v^b$ and $S^r_1$. 

    \begin{figure}[!h]
    \centering
    \includegraphics[scale=.5]{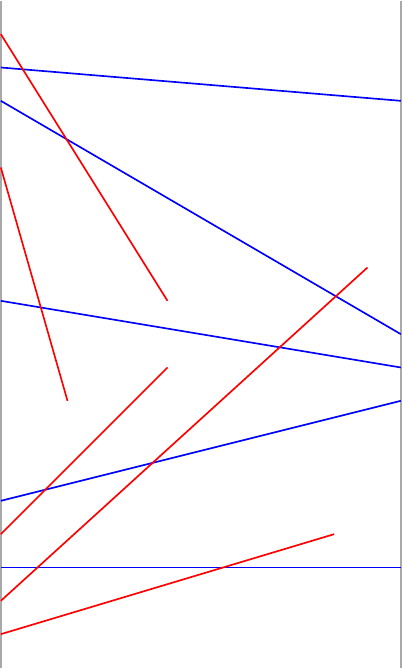}
    \caption{The slab corresponding to $v$ with long blue segments $L_v^b$ and red short segments $S^r_1$ which intersect the left boundary of the slab. \label{fig:redblueSegments}}
    \end{figure}

    It is possible to extend the segments in $S^r_1$ such that they intersect the right boundary of the slab corresponding to $v$ and the intersection graph between the extended red segments in $S^r_1$ and $L_v^b$ is a bipartite subgraph of a permutation graph. Hence by \Cref{thm:dddgraph} and~\Cref{obs:bip-subgraph}, this graph has a biclique partition of size $O(p_1 \log^2 p_1)$ where $p_1=|L^b_v|+|S^r_1|$.
    We repeat the same argument for the intersection graph between $S^r_2$ and $L_v^b$ and to the two intersection graphs we get by exchanging the colors. That is the intersection graphs between $L^r_v$ and segments $S^b_1$ and $S^b_2$, where $S^b_1$ contains all the short red intervals in $S_v^b$ which intersect the left boundary of the slab corresponding to $v$, and similarly $S^b_2$ contains all the short red intervals in $S_v^b$ which intersect the right boundary of the slab corresponding to $v$. Let $p_2=|L^b_v|+|S^r_2|$ and $p_3=|L^r_v|+|S^b_1|$, $p_4=|L^r_v|+|S^b_2|$. Then the intersection graph within the slab corresponding to $v$ has a biclique cover of size $O(p_v\log^2 p_v)$ where $p_v=\sum_{i=1}^4p_i$. 
    Consider the collection of all bicliques obtained this way.
    Using~\Cref{lem:segTree}, we know that the sum of the vertices in all the lists in $T$ is $O(n \log n)$.
    Summing over all the nodes of the tree we get that the biclique cover has size 
    $\sum_{v\in V(T)}O( p_v \log^2 p_v) \le O(n \log n \cdot \log^2 (n \log n))\le O(n \log^3 n)$, 
    as required. 
\end{proof}

\subsection{Intersection graphs of polygons with edges in $k$ directions}

We mention one more family of intersection graphs considered by Basit et al.~\cite{BCSTT21}. The proof for the following is very similar to the proof of \Cref{thm:boxes} and we omit the details here.

\begin{theorem}
    Let $H_1,H_2,\ldots, H_k$ be a set of halfspaces in $\mathbb{R}^d$. Let $\mathcal{P}$ be a finite family of polytopes in $\mathbb{R}^d$ cut out by arbitrary translates of $H_1,H_2,\ldots, H_k$. Let $G$ be the intersection graph of the polytopes in $\mathcal{P}$, then $G$ has a biclique cover of size $O(n \log^k n)$. 
\end{theorem}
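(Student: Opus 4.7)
The plan is to follow the inductive framework of Theorem~\ref{thm:boxes}, inducting on the number $k$ of halfspace directions. In the base case $k = 1$, every polytope is a translate of $H_1$, so any two of them have nonempty intersection (the intersection is the more restrictive translate) and $G$ is the complete graph, which admits a recursive biclique cover of size $O(n \log n)$.

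For the inductive step with $k \geq 2$, let $h_k$ be the linear functional defining $H_k = \{x : h_k(x) \leq 0\}$ and, for each polytope $P \in \mathcal{P}$, let $t_k^P$ be the scalar with $P \subseteq \{x : h_k(x) \leq t_k^P\}$. I would build an augmented segment tree $T_k$ over intervals derived from the values $t_k^P$---for example, by clipping each half-line $(-\infty, t_k^P]$ to a common bounding interval so that the machinery of Section~\ref{sec:intersection} applies verbatim---and examine, at each node $v$, the bipartite intersection graph between the short list $S_v$ and the long list $L_v$. The tree forces $t_k^P \leq t_k^{P'}$ for every $P \in S_v$, $P' \in L_v$, and the key reduction step would then show that this bipartite graph is a bipartite subgraph of an intersection graph of polytopes cut by only the $k-1$ halfspace directions $H_1, \ldots, H_{k-1}$. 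By the induction hypothesis combined with Observation~\ref{obs:bip-subgraph}, the bipartite graph at $v$ admits a biclique cover of size $O((|S_v|+|L_v|)\log^{k-1} n)$; summing over all nodes of $T_k$ and invoking Lemma~\ref{lem:segTree} (each polytope lies in $O(\log n)$ lists) gives a total biclique cover of size $O(n\log^k n)$.

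The main obstacle is making this reduction step rigorous. For axis-aligned boxes, intersection factors neatly as a product of axis-interval intersections, so dropping one axis yields a clean $(d-1)$-dimensional subproblem; for general polytopes, $P \cap P' = \bigcap_{i=1}^k (H_i + \min(t_i^P, t_i^{P'}))$ is an intrinsically joint condition in all coordinates. The idea I would pursue is that once the segment tree enforces $t_k^P \leq t_k^{P'}$, the $k$-th constraint of $P \cap P'$ becomes $H_k + t_k^P$, which depends only on $P$; one can then replace each $P' \in L_v$ by $P' \cap (H_k + t_k^P)$ and argue, perhaps by restricting to a slab parallel to $H_k$ whose location is dictated by the node $v$, that the edges of the resulting bipartite problem are governed entirely by the $k-1$ halfspace directions $H_1, \ldots, H_{k-1}$. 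Formalising this---so that Observation~\ref{obs:bip-subgraph} and the inductive hypothesis apply cleanly to the bipartite graph at each node---is the technical work that the authors elide when they state that the proof is ``very similar'' to that of Theorem~\ref{thm:boxes}.
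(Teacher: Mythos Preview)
Your inductive framework is right, but the intervals you feed into the segment tree are the wrong ones, and this is why the reduction step resists formalisation. With the half-lines $(-\infty,t_k^P]$, the segment tree only enforces an \emph{ordering} $t_k^P\le t_k^{P'}$ on the $k$-th threshold. As you observe, this lets you absorb the $k$-th constraint of $P'$ into $P$, but it does \emph{not} remove the $k$-th constraint from $P$: the bipartite problem at a node is an intersection problem between polytopes cut by $k$ halfspaces (the $S_v$ side) and polytopes cut by $k-1$ halfspaces (the $L_v$ side), which is not an instance of the inductive hypothesis. Restricting to a slab parallel to $H_k$ does not help either, since the feasibility of $P\cap P'$ genuinely depends on how the $k$-th constraint of $P$ interacts with the other $k-1$ directions.

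The missing observation---and the reason the paper can say the proof is ``very similar'' to \Cref{thm:boxes}---is that the intersection graph of such polytopes \emph{is} a box intersection graph in $\mathbb{R}^k$. For each $P\in\mathcal{P}$ and each $i\in[k]$, let $I_i(P)=\{h_i(x):x\in P\}$ be the projection of $P$ onto the direction of $h_i$. Then $P\cap P'\neq\emptyset$ if and only if $I_i(P)\cap I_i(P')\neq\emptyset$ for every $i$. One direction is trivial; for the other, if $P\cap P'=\emptyset$ then $0\notin P-P'$, and since every facet normal of the Minkowski difference $P-P'$ lies in $\{\pm h_1,\ldots,\pm h_k\}$, some hyperplane orthogonal to an $h_i$ separates $P$ from $P'$, whence $I_i(P)\cap I_i(P')=\emptyset$. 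With this lemma in hand, the induction of \Cref{thm:boxes} goes through verbatim using the genuine projection intervals $I_k(P)$ in place of your half-lines: at each node $v$ of the segment tree the $k$-th interval condition is automatically satisfied on $S_v\times L_v$, so the bipartite graph is exactly the intersection graph of the $(k-1)$-dimensional boxes $\prod_{i<k}I_i(\cdot)$, and the recursion closes.
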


An analogous proof also holds for the closely related class $k_{\text{DIR}}$-$\text{CONV}$ of intersection graphs of polygons with edges parallel to some fixed $k$ directions defined by Brimkov, Junosza-Szaniawski, Kafer, Kratochv\'{i}l,
Pergel, Rzazewski, Szczepankiewicz, and Terhaa~\cite{BJKKPRST18}.

\section{Lower bounds}
\label{sec:lb}

It is known that there exist configurations of $n$ points and $n$ lines with $\Theta(n^{4/3})$ point-line incidences, hence that are tight examples for the Szemer\'edi-Trotter incidence bound; see for instance the construction from Erd\H{o}s described in Edelsbrunner~\cite{MR904271}.
From \Cref{obs:K_ttfreeLB}, we have that the size $s$ of a biclique cover for a graph $G=(V,E)$ without $K_{t,t}$ for some constant $t$ must satisfy $s\geq |E|/t$. Clearly, point-line incidence graphs do not have $K_{2,2}$ subgraphs, hence there exist such graphs on $n$ vertices for which $s\geq \Omega(n^{4/3})$. We first prove a similar statement for incidence graphs of points and \emph{halfplanes} in $\mathbb{R}^2$.
The proof is essentially the same as that of Erickson~\cite[Theorem 3.4]{E96}. 

\begin{lemma}
\label{lem:hplb}
There exist incidence graphs between $n$ points and $n$ closed lower halfplanes, any biclique cover of which has size 
$\Omega (n^{4/3})$.
\end{lemma}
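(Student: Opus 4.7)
The plan is to lift the Szemer\'edi--Trotter lower bound on point-line incidence graphs to a lower bound on biclique covers of a derived point-halfplane graph via a geometric charging argument. Fix a Szemer\'edi--Trotter-tight configuration of $n$ points $P$ and $n$ lines $L$ whose point-line incidence graph $I(P,L)$ has $\Theta(n^{4/3})$ edges; this graph is automatically $K_{2,2}$-free since two distinct lines share at most one point. Let $\mathcal{H} = \{H_\ell : \ell \in L\}$ be the family of $n$ closed lower halfplanes bounded by these lines, and let $G$ be the bipartite point-halfplane incidence graph on $P \cup \mathcal{H}$. Since each $H_\ell$ is closed, every original incidence $(p,\ell)$ is also an edge of $G$, so $I(P,L) \subseteq G$ as bipartite subgraphs under the natural identification $L \leftrightarrow \mathcal{H}$.

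Given any biclique cover $\{A_i \times B_i\}_i$ of $G$, every original incidence is contained in at least one biclique of the cover. The core geometric claim is that for every biclique $(A,B)$ of $G$, the number of original incidences $(p,\ell)$ with $p \in A$ and $H_\ell \in B$ is at most $|A| + 2|B|$. Summing this inequality over all bicliques of the cover yields
\[
\Omega(n^{4/3}) \;=\; |E(I(P,L))| \;\leq\; \sum_i \bigl(|A_i| + 2|B_i|\bigr) \;\leq\; 2\sum_i\bigl(|A_i| + |B_i|\bigr),
\]
which is the required $\Omega(n^{4/3})$ lower bound on the total cover size.

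To prove the claim, identify $B$ with its set of bounding lines. By the biclique property, every $p \in A$ lies on or below every line of $B$, so $A$ lies weakly below the lower envelope $f_B(x) := \min_{\ell \in B}\ell(x)$, a concave piecewise-linear function with at most $|B|$ linear pieces. An incidence $(p,\ell) \in A \times B$ with $p$ on $\ell$ forces $p$ to lie on the graph of $f_B$, and I split such $p$ according to whether $p$ lies in the interior of an envelope segment or at an envelope vertex. For $p$ in the interior of the envelope segment contributed by some $\ell_0 \in B$, any other line $\ell' \in B$ through $p$ would satisfy $\ell'(x_p) = \ell_0(x_p)$ with a different slope, forcing $\ell'(x) < \ell_0(x) = f_B(x)$ on one side of $x_p$ and contradicting $\ell' \geq f_B$; hence only $\ell_0$ passes through $p$, and interior points contribute at most $|A|$ incidences in total. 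For envelope vertices $v$, I bound $\sum_v m(v,B)$, where $m(v,B) := |\{\ell \in B : v \in \ell\}|$, by exchanging the order of summation: each $\ell \in B$ whose graph coincides with $f_B$ on a positive-length interval contributes exactly two envelope vertices---the endpoints of its envelope segment---because the zero set of the nonnegative convex function $\ell - f_B$ is convex and so cannot contain a nondegenerate interval together with another isolated point; each $\ell \in B$ whose graph does not coincide with $f_B$ on any positive-length interval meets the graph of $f_B$ in at most one point, by the same convexity. Summing, $\sum_v m(v,B) \leq 2|B|$, proving the claim.

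The main subtlety worth emphasizing is that Szemer\'edi--Trotter-tight configurations must have heavy concurrencies of lines through common points of $P$---we cannot assume general position---so the per-biclique $O(|A|+|B|)$ bound must be robust to arbitrary concurrencies. The envelope argument achieves this for free, using only the concavity of $f_B$ and the convexity of each $\ell - f_B$, and this is exactly what allows the charging to match (up to logarithmic factors) the known $\widetilde{O}(n^{4/3})$ upper bound for point-halfplane biclique covers.
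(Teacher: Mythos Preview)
Your proof is correct and follows essentially the same approach as the paper: both start from a Szemer\'edi--Trotter-tight point--line configuration, pass to closed lower halfplanes, bound the number of original point--line incidences inside each biclique by $O(|A|+|B|)$ using the lower-envelope structure of the bounding lines, and sum over the cover. The only difference is in how the per-biclique bound is argued---the paper uses an iterative ``leftmost incidence'' peeling to get $|P_i|+|H_i|$, whereas you count directly via the convexity of $\ell - f_B$ to get $|A|+2|B|$---which is a minor variation of the same idea.
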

\begin{proof}
We consider a configuration $(P,L)$ of $n$ points and $n$ lines with $\Theta(n^{4/3})$ point-line incidences.
Let $H$ be the set of closed lower halfplanes bounded by the lines of $L$. We denote by $I(P,H)$ the incidence graph between the points of $P$ and the halfplanes of $H$.

Let $P_i\subseteq P$ and $H_i\subseteq H$ be the two sets of vertices in the $i$th biclique of a biclique cover of $I(P,H)$. 
Let $L_i\subseteq L$ be the lines bounding the halfspaces of $H_i$, and let us denote by $\iota (P_i, L_i)$ the number of pairs $(p,\ell)\in P_i\times L_i$ such that $p\in\ell$.

\begin{claim}
    \[
    |P_i| + |H_i| \geq \iota (P_i, L_i).
    \]
\end{claim}
\begin{proof}[Proof of claim.]
    Let $R_i$ denote the intersection of the lower halfplanes in $H_i$. 
    By definition, $R_i$ is a downward closed convex polygonal region, and $P_i\subset R_i$.

    Consider the leftmost incidence $(p,\ell)$ in $(P_i,L_i)$, involving the leftmost point with the leftmost line. Suppose that $p$ is incident to more than one line. It must then be the case that $\ell$ does not contain any other point than $p$. Hence either $p$ is incident to only one line, or $\ell$ contains only one point. We can therefore always remove either a point or a line and remove one incidence. The number of incidences is then at most $|P_i|+|L_i| = |P_i|+|H_i|$, as claimed.
\end{proof}

We now obtain a lower bound on the size of the biclique cover as follows: 
\[ 
    \sum_i |P_i| + |H_i| \geq \sum_i \iota (P_i, L_i) \geq \iota (P, L) \geq \Omega (n^{4/3}).
\]
where the second inequality is from the fact that every incidence in $(P, L)$ is an edge of $I(P,H)$, hence must be covered by at least one of the biclique.
\end{proof}

We now turn our attention to unit disk graphs, which are perhaps among the simplest non-semilinear geometric intersection graphs.

\begin{lemma}
\label{lem:udglb}
There exist unit disk graphs on $n$ vertices, any biclique cover of which has size 
$\Omega (n^{4/3})$.
\end{lemma}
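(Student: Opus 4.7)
The plan is to reduce the lower bound to \Cref{lem:hplb} for point-halfplane incidence graphs, by constructing a unit disk graph that contains such an incidence graph as a bipartite induced subgraph.

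Start from a tight configuration $(P, H)$ as in the proof of \Cref{lem:hplb}: $n$ points $P$ and $n$ closed halfplanes $H$ whose bipartite incidence graph $I(P, H)$ has $\Omega(n^{4/3})$ edges and requires a biclique cover of size $\Omega(n^{4/3})$. After rescaling and translating, assume that all points of $P$ and all bounding lines of halfplanes in $H$ lie inside a disk of radius $\varepsilon$ about the origin, for some sufficiently small $\varepsilon > 0$ (to be chosen polynomially small in $n$).

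Construct a unit disk graph $G$ on the vertex set $V_P \cup V_H$, where $V_P := \{D_p : p \in P\}$ with $D_p$ the unit disk centered at $p$, and $V_H := \{D_h : h \in H\}$ with $D_h$ the unit disk centered at $-(2 - c_h)\,\nu_h$; here $h = \{x \in \mathbb{R}^2 : x \cdot \nu_h \le c_h\}$ for some unit normal $\nu_h$ and some $c_h \in [-\varepsilon, \varepsilon]$. A short expansion of $|p - (-(2 - c_h)\nu_h)|^2 \le 4$ shows that $D_p \cap D_h \ne \emptyset$ if and only if
\[
p \cdot \nu_h \;\le\; c_h \;+\; \tfrac{c_h^2 - |p|^2}{4} \;+\; O(\varepsilon^3),
\]
so the deviation from the exact halfplane condition $p \cdot \nu_h \le c_h$ is of order $O(\varepsilon^2)$. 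The incidence gap of the standard Szemer\'edi-Trotter grid scales linearly in $\varepsilon$ (times a $1/\mathrm{poly}(n)$ factor), so taking $\varepsilon$ sufficiently small as a function of $n$ ensures this quadratic error does not flip any incidence or non-incidence. Consequently the induced bipartite subgraph of $G$ on $V_P$ and $V_H$ is isomorphic to $I(P, H)$.

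Finally, apply a projection argument: any biclique $(A, B)$ of $G$ restricts to the two bipartite bicliques $(A \cap V_P,\, B \cap V_H)$ and $(A \cap V_H,\, B \cap V_P)$, with combined vertex count exactly $|A| + |B|$. Hence any biclique cover of $G$ of size $s$ yields a biclique cover of the bipartite subgraph of size at most $s$; since this bipartite subgraph is isomorphic to $I(P, H)$, \Cref{lem:hplb} gives $s = \Omega(n^{4/3})$. The graph $G$ has $2n$ vertices, so the claimed lower bound for unit disk graphs on $n$ vertices follows after relabeling.

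The main obstacle is controlling the quadratic approximation error in the encoding: the boundary of a unit disk approximates a straight line only up to order $\varepsilon^2$ near the origin, and this must be dominated by the incidence gap of the ST configuration. This is handled by choosing $\varepsilon$ polynomially small in $n$, at which point the encoding becomes exact.
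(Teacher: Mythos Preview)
Your approach is essentially the same as the paper's: approximate each halfplane by a unit disk whose boundary is a large circle tangent to the bounding line, place unit disks at the points, and then pass to the bipartite subgraph via the projection argument (which is exactly the paper's Observation~\ref{obs:bip-subgraph}). The reduction to \Cref{lem:hplb} is the intended one.

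There is, however, a genuine gap in your error analysis. Your expansion gives the intersection condition
\[
p\cdot\nu_h \;\le\; c_h + \frac{c_h^2 - |p|^2}{4} + O(\varepsilon^3),
\]
and you argue that the $O(\varepsilon^2)$ correction is dominated by the ``incidence gap'' of the configuration. This is fine for \emph{non}-incident pairs (where $p\cdot\nu_h - c_h$ is bounded away from zero by $\varepsilon/\mathrm{poly}(n)$) and for points strictly interior to a halfplane. But the $\Theta(n^{4/3})$ incidences that drive the lower bound in \Cref{lem:hplb} are exactly the pairs with $p\cdot\nu_h = c_h$, for which the gap is \emph{zero}. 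For such a pair the disk condition reads $0 \le (c_h^2 - |p|^2)/4 + O(\varepsilon^3)$, and whenever $|p| > |c_h|$ this fails: the incidence is lost in the disk model. Since you invoke \Cref{lem:hplb} as a black box via the isomorphism with $I(P,H)$, losing these edges breaks the reduction.

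The fix is simple and is what the paper does: before replacing lines by circles, shift each bounding line by a tiny amount $\delta$ in the direction $-\nu_h$ (equivalently, replace $c_h$ by $c_h + \delta$), with $\delta$ chosen strictly between the $O(\varepsilon^2)$ approximation error and the $\varepsilon/\mathrm{poly}(n)$ minimum distance from non-incident points to their lines. After this shift every original boundary incidence becomes a strict containment with slack $\delta$, every non-incidence remains a non-incidence, and now your quadratic error is dominated on \emph{both} sides. With this one-line amendment your argument goes through and matches the paper's proof.
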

\begin{proof}
It is enough to show that the incidence graph $I(P,H)$ in the proof of Lemma~\ref{lem:hplb} can be realized with unit disks.
Indeed, take the configuration $(P,L)$ and shift every line of $L$ upwards by a small vertical offset, so that the points involved in the incidences lie now slightly below their lines. We can now safely replace these lines by circles of the same very large radius, keeping the points of $P$ in the same relative positions with respect to each of the lines. Let $Q$ denote the set of centers of the circles. By a proper scaling, we can assume without loss of generality that those circles have radius two.

We now use \Cref{obs:bip-subgraph} stating that if a graph $G$ has a biclique cover of size $s$, then any bipartite graph obtained from $G$ by coloring the vertices in two colors and removing all monochromatic edges also has a biclique cover of size $s$. It is therefore sufficient to prove a lower bound on the size of a biclique cover of the bipartite intersection graph of the unit disks of centers respectively in $P$ and $Q$. By construction, this is exactly the point-halfplane incidence graph $I(P,H)$, and we can now apply the result of Lemma~\ref{lem:hplb}.
\end{proof}

Note that a similar statement should hold for intersection graphs of translates of any smooth strictly convex body.

One may also wonder if there exists superlinear lower bounds on the size of biclique covers for families of semilinear graphs. We can easily deduce such a lower bound from a recent result from Bhore et al.~\cite{BCHST25}.

\begin{lemma}
There exist graphs on $n$ vertices and boxicity at most $d$, any biclique cover of which has size $\Omega (n(\log n/\log\log n)^{d-2})$.
\end{lemma}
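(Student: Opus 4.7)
The plan is to derive the lower bound by combining the counting argument of Observation~\ref{obs:K_ttfreeLB} with an edge-rich $K_{t,t}$-free construction provided by Bhore et al.~\cite{BCHST25}. Recall that Observation~\ref{obs:K_ttfreeLB} states that if $G$ contains no $K_{t,t}$ subgraph and admits a biclique cover of size $s$, then $|E(G)| \le t\cdot s$; equivalently, any biclique cover of such a $G$ has size at least $|E(G)|/t$. So to obtain a superlinear lower bound on the representation complexity of the boxicity-$d$ class, it is enough to exhibit a family of boxicity-$d$ graphs that are $K_{t,t}$-free for some constant $t=t(d)$ and yet have as many edges as the claimed bound.

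To this end, I would invoke the construction of Bhore et al.~\cite{BCHST25}: for any fixed $d\geq 2$ and a suitable constant $t=t(d)$, they exhibit an $n$-vertex intersection graph of axis-aligned boxes in $\mathbb{R}^d$ (hence a graph of boxicity at most $d$) that is $K_{t,t}$-free while containing $\Omega\bigl(n(\log n/\log\log n)^{d-2}\bigr)$ edges. Such constructions are rooted in the classical Chazelle-type lower bounds for orthogonal range reporting in the pointer machine model, reinterpreted in terms of bipartite intersection graphs between points and boxes, and then converted to an intersection graph of boxes of bounded dimension by a standard lifting.

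Applying Observation~\ref{obs:K_ttfreeLB} to such a graph $G$ and to any biclique cover of it of size $s$ yields
\[
s \;\geq\; \frac{|E(G)|}{t} \;=\; \Omega\!\left(n\left(\tfrac{\log n}{\log\log n}\right)^{d-2}\right),
\]
which is precisely the claim.

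The main technical content lies entirely in the Bhore et al.\ construction, and specifically in verifying the two properties (i) boxicity at most $d$ and (ii) $K_{t(d),t(d)}$-freeness. Once these are in hand the lemma is an immediate consequence of the counting inequality of Observation~\ref{obs:K_ttfreeLB}; no further argument is needed.
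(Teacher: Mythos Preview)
Your route is genuinely different from the paper's, and the difference matters. The paper does \emph{not} invoke Observation~\ref{obs:K_ttfreeLB}. Instead it uses Lemma~\ref{lem:spanner}: a biclique cover of size $s$ always yields a $3$-hop spanner with $s$ edges. What Bhore et al.\ are cited for is a \emph{$3$-hop spanner lower bound}: they construct box intersection graphs in $\mathbb{R}^d$ whose every $3$-hop spanner has $\Omega\bigl(n(\log n/\log\log n)^{d-2}\bigr)$ edges. Combining these two facts gives the lemma in one line, with no appeal to $K_{t,t}$-freeness at all.

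Your argument, by contrast, requires that the Bhore et al.\ construction be $K_{t,t}$-free for some constant $t=t(d)$ while having $\Omega\bigl(n(\log n/\log\log n)^{d-2}\bigr)$ edges. That is not the property the paper attributes to them, and it is not implied by the spanner lower bound: a graph can force large $3$-hop spanners without being $K_{t,t}$-free for any fixed $t$. You assert the $K_{t,t}$-freeness but do not verify it; you even flag it yourself as the ``main technical content.'' So as written the proposal rests on an unchecked claim about a reference. If you can point to an explicit $K_{t,t}$-free box construction with that edge count (Zarankiewicz-type constructions of this flavor do exist in the literature), your approach goes through; otherwise the paper's spanner-based argument is the safer one, and it is also shorter.
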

\begin{proof}
    A construction from Bhore et al.~\cite{BCHST25} shows that there exist intersection graphs of $n$ boxes in dimension $d$ such that any 3-hop spanners requires $\Omega (n(\log n/\log\log n)^{d-2})$ edges. 
    From Lemma~\ref{lem:spanner}, it implies the same lower bound on the size of their biclique covers. 
\end{proof}

\section{Open questions}

Many open questions remain. We showed a few upper bounds on the size of the biclique cover for restricted families of semilinear graphs, such as intersection graphs of axis-aligned boxes and grounded L-shapes. These bounds improve upon those that can be derived from the general upper bound for semilinear graphs. A first natural question is whether any of those bounds can be improved or, alternatively, what are the achievable lower bounds on the size of a biclique cover for these graphs. The question of improving the upper bound also remains for capped graphs; do capped graphs admit $O(n\log^2 n)$-size biclique covers?

For both capped and non-jumping graphs we showed an upper bound of $O(n \log^3 n)$ on the size of their biclique cover. Recall that capped and non-jumping graphs are graphs equipped with an ordering on their vertices such that for any four vertices $i<j<k<\ell$, if $ik$ and $j\ell$ are edges, then the edge $i \ell$ must also be present in a capped graph, and the edge $jk$ must be present in a non-jumping graph.
There is an interesting superclass of both capped and non-jumping graphs, in which given four vertices $i<j<k<\ell$ such that $ik$ and $j\ell$ are edges, \emph{either} the edge $jk$ \emph{or} $i\ell$ must be present. In particular, they contain the terrain visibility graphs considered by Katz, Saban, and Sharir~\cite{KSS24}, in which points lying strictly above the terrain are also allowed as vertices. Do these ordered graphs admit biclique covers of small size? 

More generally, it would be interesting to characterize the forbidden patterns which give rise to families of (ordered) graphs for which the size of a biclique cover is $O(n \text{ poly}\log n)$. Note that there are forbidden patterns of size $3$ for which the family of graphs which forbids them does not have such a bound. For example, $3$ vertices which induce a triangle. 

Finally, in this work we mostly focused on graph classes that are either arising in a geometric setting or forbid some ordered patterns. It would be interesting to understand how the size of a clique cover correlates with other width parameters of graphs, for example, the {\em twin-width} of a graph \cite{BKTW21}. 

\subsection*{Acknowledgments}
This work was inspired by discussions at the Tenth Annual Workshop on Geometry and Graphs (WoGaG’23), held at the Bellairs Research Institute, (McGill), Barbados, in February 2023. Yelena Yuditsky was supported by the Fonds de la Recherche Scientifique - FNRS as a Postdoctoral Researcher. Jean Cardinal was supported by the Fonds de la Recherche Scientifique - FNRS under Grant n° T003325F.

\bibliographystyle{plain}
\bibliography{bibliography}

\end{document}